\theoremstyle{plain}
\newtheorem{theorem}{Theorem}[section]
\newtheorem{thm}[theorem]{Theorem}
\newtheorem{lemma}[theorem]{Lemma}
\newtheorem{cor}[theorem]{Corollary}
\newtheorem{prop}[theorem]{Proposition}
\newtheorem{example}[theorem]{Example}
\newtheorem{remark}[theorem]{Remark}
\theoremstyle{definition}
\newtheorem{defn}[theorem]{Definition}
\newtheorem{question}[theorem]{Question}
\newtheorem{rmk}[theorem]{Remark}
\newtheorem{exam}[theorem]{Example}
\begin{document}
\title{On Leibniz algebras whose centralizers are ideals}
\author{Pratulananda Das}
\email{pratulananda@yahoo.co.in}
\address{Department of Mathematics,
Jadavpur University, Kolkata-700032, West Bengal, India.}

\author{Ripan Saha}
\email{ripanjumaths@gmail.com}
\address{Department of Mathematics,
Raiganj University,
College para, Uttar Dinajpur, PIN-733134, West Bengal, India.}

\subjclass[2010]{17A32, 17A30, 17B30.}
\keywords{Leibniz algebra; centralizer; CL-algebra; nilpotent Leibniz algebra; group action.}
\begin{abstract}
This paper concerns the study of Leibniz algebras, a natural generalization of Lie algebras, from the perspective of centralizers of elements. We study conditions on Leibniz algebras under which centralizers of all elements are ideals. We call a Leibniz algebra, a CL-algebra if centralizers of all elements are ideals. We discuss nilpotency of CL-algebras.
\end{abstract}
\maketitle
\section{Introduction}
In \cite{LP}, J.-L. Loday introduced some new types of a non-anticommutative version of Lie algebras along with their (co)homologies. Historically, such algebraic structures had been studied by A. Bloh who called them D-algebras \cite{Bloh}. In this new type of algebras, the bracket satisfies Leibniz identity instead of Jacobi identity and such algebras are called Leibniz algebras.

In group theory, centralizers of a group give many useful pieces of information about the structure of the group. Investigations of centralizers is an interesting research area in group theory \cite{A, BS}. Centralizers of Lie algebras are studied in many articles \cite{BI, Z, SMR}. The present article contributes to the development of the theory of non-associative algebras from the perspective of centralizers of Leibniz algebras. Leibniz algebras whose subalgebras are ideals are studied in \cite{KSS}.  In this article, we first study conditions on Leibniz algebras for which centralizers of all elements of Leibniz algebras are ideals. Leibniz algebras which satisfy the required conditions will be called CL-algebras. The main question addressed in this paper is whether a finite-dimensional nilpotent Leibniz algebra is a CL-algebra or not. We show in Theorem \ref{main theorem nil} that any nilpotent complex Leibniz algebra up to dimension four is a CL-algebra. We define a notion of CL-property in a Leibniz algebra. Elements which satisfy the CL-property are called CL-elements. In \cite{MS}, Mukherjee and Saha have introduced a notion of a finite group action on a Leibniz algebra. We show that a CL-element is preserved under the group actions on Leibniz algebras. At the end of this article, we mention some questions for further study.
\section{Preliminaries}
In this section, we discuss some basics of Leibniz algebras.
\begin{defn}   
Let $\mathbb{K}$ be a field. A Leibniz algebra is a vector space $L$ over $\mathbb{K},$ equipped with a bracket operation, which is $\mathbb{K}$-bilinear and satisfies the Leibniz identity: 
$$[x,[y,z]]= [[x,y],z]-[[x,z],y] ~~\mbox{for}~x,~y,~z \in L.$$
\end{defn}
Note that any Lie algebra is automatically a Leibniz algebra as in the presence of skew symmetry the Leibniz identity is the same as the Jacobi identity. A Leibniz algebra $(L, [~,~])$ is called an abelian algebra if $[x,y]=0$ for all $x, y\in L$.

A morphism between two Leibniz algebras $L_1$ and $L_2$ is a $\mathbb{K}$-linear map $f:L_1\to L_2$ which satisfies $f([x,y])=[f(x),f(y)]$ for all $x,y\in L_1$.
\begin{exam}\label{example-1}
Consider a differential Lie algebra $(L,d)$ with the Lie bracket $[~,~]$. Then $L$ has a  Leibniz algebra structure with the bracket operation $[x,y]_d:= [x,dy]$. The new bracket on $L$ is called the derived bracket.
\end{exam}
\begin{defn}
A Leibniz algebra $L$ is said to be simple if it contains only the following ideals: $\lbrace 0 \rbrace, I, L$. Here $I$ denotes ideal generated by elements of the form $[x,x]$ for all $x\in L$.
\end{defn}
Given a Leibniz algebra $L$, we define the following two sided ideals:
\begin{align*}
&L^1=L\,\,\,\text{and}\,\,\,L^{k+1}=[L^k,L],\\ 
&L^{[1]}=L\,\,\,\text{and}\,\,\,L^{[k+1]}=[L^{[k]},L^{[k]}]\,\,\, \text{for}\,\,\,k\geq 1.
\end{align*}
\begin{defn}
A Leibniz algebra $L$ is called a nilpotent Leibniz algebra if there exist $n\in \mathbb{N}$ such that $L^n=0$. Suppose $n\in \mathbb{N}$ is least and $L^n=0$ then $L$ is called an $n$-step nilpotent Leibniz algebra.
\end{defn}
\begin{exam}\label{ex}\label{example-2}
\cite{A3} Consider a three dimensional vector space $L$ spanned by \linebreak$\{e_1,~e_2,~e_3\}$ over $\mathbb{C}$. Define a bilinear map $[~,~]: L\times L \longrightarrow L$ by $[e_1,e_3]=e_2$ and $[e_3,e_3]= e_1$, all other products of basis elements being $0$. Then $(L,[~,~])$ is a Leibniz algebra over $\mathbb{C}$ of dimension $3$. The Leibniz algebra $L$ is  nilpotent and is denoted by $\lambda_6$  in the classification of three dimensional nilpotent Leibniz algebras. 
\end{exam}
\begin{defn}
A Leibniz algebra $L$ is called a solvable Leibniz algebra if there exist $n\in \mathbb{N}$ such that $L^{[n]}=0$. 
\end{defn}
Note that any nilpotent Leibniz algebra is a solvable Leibniz algebra but the converse is not true in general. For example, the subalgebra of 
$\mathfrak{gl}(n, \mathbb{R}),~n\geq 2$, consisting of upper triangular matrices, ${\mathfrak {b}}(n,\mathbb {R} )$, is solvable but not nilpotent \cite[p.~10]{Humphreys}.
\begin{defn}
An ideal $P$ of a nilpotent Leibniz algebra $L$ is called nilpotent if $P$ is nilpotent as a Leibniz algebra.
\end{defn}
\subsection{Classsification of nilpotent Leibniz algebras upto dimension 4}
Here we recall some well-known classification results for nilpotent Leibniz algebras.
\begin{theorem}\cite{Loday}\label{dim2}
Let $L$ be a $2$-dimensional nilpotent Leibniz algebra. Then $L$ is either abelian or isomorphic to
$$\mu_1: [e_1,e_1]=e_2.$$
\end{theorem}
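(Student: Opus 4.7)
The plan is to pick a well-adapted basis and read off the structure constants using nilpotence together with the Leibniz identity. If $L$ is abelian there is nothing to prove, so assume $L^2\ne 0$. Since $L$ is nilpotent, the descending chain $L\supseteq L^2\supseteq L^3\supseteq\cdots$ must be strictly decreasing until it reaches $0$, and in dimension two this forces $\dim L^2=1$ and $L^3=0$. I would fix a generator $e_2$ of $L^2$, extend to a basis $\{e_1,e_2\}$ of $L$, and write
$$[e_1,e_1]=\alpha e_2,\quad [e_1,e_2]=\beta e_2,\quad [e_2,e_1]=\gamma e_2,\quad [e_2,e_2]=\delta e_2$$
for scalars $\alpha,\beta,\gamma,\delta\in\mathbb{K}$.

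Next, the condition $L^3=[L^2,L]=0$ immediately gives $\gamma=\delta=0$. To eliminate $\beta$, I would apply the Leibniz identity with $(x,y,z)=(e_1,e_1,e_2)$, namely
$$[e_1,[e_1,e_2]]=[[e_1,e_1],e_2]-[[e_1,e_2],e_1].$$
Using $\gamma=\delta=0$ the right hand side vanishes, while the left hand side equals $\beta^2 e_2$, whence $\beta=0$. Since $L$ is non-abelian we must have $\alpha\ne 0$, and rescaling the basis vector $e_2$ to $\alpha e_2$ yields $[e_1,e_1]=e_2$ with all other brackets zero. This is precisely the presentation $\mu_1$.

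The only step that is not entirely mechanical is obtaining $\beta=0$: it uses the full Leibniz identity rather than antisymmetry, and illustrates how in the Leibniz setting one must track four independent structure constants and rely on the defining identity (rather than skew-symmetry) to collapse them down to the single essential one.
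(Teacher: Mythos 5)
Your argument is correct and complete. Note that the paper does not prove this statement at all: Theorem \ref{dim2} is quoted from \cite{Loday} as a known classification result, so there is no in-paper proof to compare against. Your derivation is the standard direct one and all steps check out: nilpotency forces the lower central series to be strictly decreasing until it vanishes (if $L^{k+1}=L^k\neq 0$ the series stabilizes, contradicting nilpotency), so in the non-abelian case $\dim L^2=1$ and $L^3=0$; since every bracket lies in $L^2=\langle e_2\rangle$, the condition $L^3=0$ kills $\gamma$ and $\delta$; and the Leibniz identity applied to $(e_1,e_1,e_2)$ gives $\beta^2=0$, hence $\beta=0$ over any field. The final rescaling $e_2\mapsto\alpha e_2$ is legitimate since $\alpha\neq 0$, and it yields exactly the presentation $\mu_1$. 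Your closing remark is also apt: the elimination of $\beta$ is genuinely where the Leibniz identity (rather than skew-symmetry) does the work, which is the only non-mechanical point in dimension two.
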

\begin{theorem}\cite{AAO}\label{dim3}
Let $L$ be a $3$-dimensional nilpotent Leibniz algebra. Then $L$ is isomorphic to one of the following pairwise non-isomorphic algebras:
\begin{align*}
&\lambda_1: \text{abelian},\\
&\lambda_2:[e_1,e_1]=e_3,\\
&\lambda_3:[e_1,e_2]=e_3,[e_2,e_1]=-e_3,\\
&\lambda_4(\alpha): [e_1,e_1]=e_3,[e_2,e_2]=\alpha e_3,[e_1,e_2]=e_3,\\
&\lambda_5:[e_2,e_1]=e_3,[e_1,e_2]=e_3,\\
&\lambda_6:[e_1,e_1]=e_2,[e_2,e_1]=e_3.
\end{align*}
\end{theorem}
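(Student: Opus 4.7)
The plan is to classify a $3$-dimensional nilpotent Leibniz algebra $L$ over $\mathbb{C}$ according to the dimension $d = \dim L^2$ of its derived ideal, which must satisfy $d \in \{0, 1, 2\}$. If $d = 0$ then $L$ is abelian, giving $\lambda_1$. For $d \geq 1$, I would fix an adapted basis compatible with the filtration $L \supseteq L^2 \supseteq L^3 \supseteq 0$ and reduce the multiplication table to canonical form via changes of basis that respect this filtration.

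For $d = 1$, choose a generator $e_3$ of $L^2$. Since $L^3 = [L^2, L] \subseteq L^2$ and $L$ is nilpotent, we must have $L^3 = 0$, hence $[e_3, L] = 0$; the Leibniz identity $[x, [y, z]] = [[x, y], z] - [[x, z], y]$ then forces $[L, e_3] = 0$ as well, placing $L^2$ in the center. Extending to a basis $\{e_1, e_2, e_3\}$, the multiplication is encoded by a $2 \times 2$ matrix $A = (a_{ij})$ with $[e_i, e_j] = a_{ij} e_3$ for $i, j \in \{1, 2\}$, and the Leibniz identity is automatically satisfied. A filtration-preserving change of basis acts on $A$ by the twisted congruence $A \mapsto c^{-1} M^T A M$ with $M \in \mathrm{GL}_2(\mathbb{C})$ and $c \in \mathbb{C}^\times$. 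Writing $A = S + K$ with $S$ symmetric and $K$ antisymmetric, the two pieces transform nearly independently, since every $2 \times 2$ antisymmetric matrix is a scalar multiple of a fixed $J$ with $M^T J M = \det(M) J$. Classifying nonzero orbits over $\mathbb{C}$ yields four types: $S$ of rank one with $K = 0$ gives $\lambda_2$; $S = 0$ with $K \neq 0$ gives $\lambda_3$; $S$ of rank two with $K = 0$ gives $\lambda_5$; and the mixed orbits with $S, K$ both nonzero assemble into the one-parameter family $\lambda_4(\alpha)$.

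For $d = 2$, nilpotency forces $L^3 \neq 0$ (else $L^2$ would be central, but then $[L, L]$ would be spanned by the single bracket $[e_1, e_1]$ and have dimension at most one) and $L^4 = 0$, yielding the strict flag $L \supset L^2 \supset L^3 \supset 0$ with $\dim L^3 = 1$. Choose $e_3$ spanning $L^3$, extend to $\{e_2, e_3\}$ spanning $L^2$, and extend to $\{e_1, e_2, e_3\}$ spanning $L$. After rescaling one arranges $[e_1, e_1] = e_2$; the Leibniz identity applied to $[e_1, [e_1, e_1]]$ and $[[e_1, e_1], e_2]$ forces $[e_1, e_2] = 0$ and $[e_2, e_2] = 0$, while $L^3 \neq 0$ forces $[e_2, e_1]$ to be a nonzero multiple of $e_3$, which a final rescaling of $e_3$ normalizes to $[e_2, e_1] = e_3$. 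This yields $\lambda_6$. Pairwise non-isomorphism of $\lambda_1, \ldots, \lambda_6$ is then verified using $\dim L^2$, $\dim Z(L)$, the symmetry type of the bracket modulo $L^2$ in the $d = 1$ cases, and a continuous invariant separating values of $\alpha$ in $\lambda_4(\alpha)$.

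The main obstacle is the orbit analysis in case $d = 1$, particularly the mixed subcase where both $S$ and $K$ are nonzero: one must produce the explicit normal form $\lambda_4(\alpha)$, exhibit a congruence invariant (such as a discriminant-type expression in the entries of $A$) that distinguishes generic values of $\alpha$, and check that no specialization of $\alpha$ secretly collapses onto $\lambda_2$, $\lambda_3$, or $\lambda_5$. A secondary subtlety in case $d = 2$ is confirming, via careful Leibniz-identity bookkeeping, that no residual parameter survives normalization, so that $\lambda_6$ is indeed the unique isomorphism class with $\dim L^2 = 2$.
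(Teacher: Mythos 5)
This theorem is not proved in the paper at all: it is quoted from Albeverio--Ayupov--Omirov \cite{AAO}, so there is no internal proof to compare your argument against. Judged on its own terms, your strategy is the standard one and the skeleton is sound: split on $d=\dim L^2$; for $d=1$ note $L^3=0$ by nilpotency, deduce $[L,L^2]=0$ from $[L^2,L]=0$ via the Leibniz identity (correct), and reduce to orbits of the induced bilinear form $A$ on $L/L^2\cong\mathbb{C}^2$ under $A\mapsto c^{-1}M^{T}AM$; for $d=2$ force the flag $L\supset L^2\supset L^3\supset 0$ and normalize to $\lambda_6$. Your $d=2$ computation is essentially right, though you should also record that $[e_1,e_3]=0$ and $[e_3,\,\cdot\,]=0$ (the latter from $L^4=0$, the former from $[e_1,[e_2,e_1]]=[[e_1,e_2],e_1]-[[e_1,e_1],e_2]=0$) to see that no further structure constants survive.

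The genuine gap is the step you yourself flag as ``the main obstacle'': the orbit analysis in the $d=1$ mixed subcase is exactly where the content of the classification lives, and it is announced rather than executed. To close it you must (i) prove that every $A$ with nonzero symmetric and antisymmetric parts is equivalent to $\bigl(\begin{smallmatrix}1&1\\0&\alpha\end{smallmatrix}\bigr)$ for some $\alpha\in\mathbb{C}$; (ii) produce the separating invariant --- for $\alpha\neq0$ the conjugacy class of the cosquare $A^{-T}A$ works, since it is invariant under $A\mapsto cM^{T}AM$ and has trace $2-1/\alpha$, so distinct $\alpha$ give non-isomorphic algebras; and (iii) observe that congruence preserves symmetry and antisymmetry of $A$, so no $\lambda_4(\alpha)$ degenerates to $\lambda_2$, $\lambda_3$, or $\lambda_5$, and the remaining classes are separated by the rank and symmetry type of $A$. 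Until (i)--(iii) are written out, the proposal is a correct and workable plan, but not yet a proof of the stated classification.
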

\begin{theorem}\label{dim4}\cite{AOR} There exist up to isomorphism five one parametric families and twelve concrete representatives of nilpotent complex Leibniz algebras of dimension four, namely:
$$\rho_1: [e_1,e_1]=e_2, [e_2,e_1]=e_3,
[e_3,e_1]=e_4;$$
$$\rho_{2}: [e_1,e_1]=e_3, [e_1,e_2]=e_4,
[e_2,e_1]=e_3, [e_3,e_1]=e_4;$$
$$\rho_{3}: [e_1,e_1]=e_3, [e_2,e_1]=e_3, [e_3,e_1]=e_4;$$
$$\rho_4(\alpha): [e_1,e_1]=e_3, [e_1,e_2]=\alpha e_4,
[e_2,e_1]=e_3, [e_2,e_2]=e_4, [e_3,e_1]=e_4, \quad \alpha\in
\{0,1\};$$
$$\rho_{5}:  [e_1,e_1]=e_3, [e_1,e_2]=e_4, [e_3,e_1]=e_4;$$
$$\rho_6: [e_1,e_1]=e_3, [e_2,e_2]=e_4, [e_3,e_1]=e_4 ;$$
$$\rho_7: [e_1,e_1]=e_4, [e_1,e_2]=e_3, [e_2,e_1]=- e_3, [e_2,e_2]= -2e_3 +e_4 ;$$
$$\rho_8: [e_1,e_2]=e_3, [e_2,e_1]=e_4, [e_2,e_2]=-e_3;$$
$$\rho_9(\alpha):  [e_1,e_1]=e_3, [e_1,e_2]=e_4, [e_2,e_1]=-\alpha e_3,
[e_2,e_2]=-e_4, \quad \alpha\in \mathbb{C};$$
$$\rho_{10}(\alpha):  [e_1,e_1]=e_4, [e_1,e_2]=\alpha e_4,
[e_2,e_1]=-\alpha e_4, [e_2,e_2]=e_4, [e_3,e_3]=e_4, \quad
\alpha\in \mathbb{C};$$
$$\rho_{11}: [e_1,e_2]=e_4, [e_1,e_3]=e_4, [e_2,e_1]=-e_4, [e_2,e_2]=e_4, [e_3,e_1]=e_4;$$
$$\rho_{12}: [e_1,e_1]=e_4, [e_1,e_2]=e_4, [e_2,e_1]=-e_4, [e_3,e_3]=e_4;$$
$$\rho_{13}: [e_1,e_2]=e_3, [e_2,e_1]=e_4;$$
$$\rho_{14}: [e_1,e_2]=e_3, [e_2,e_1]=-e_3, [e_2,e_2]=e_4;$$
$$\rho_{15}: [e_2,e_1]=e_4, [e_2,e_2]=e_3;$$
$$\rho_{16}(\alpha): [e_1,e_2]=e_4, [e_2,e_1]=(1+\alpha)/(1-\alpha)e_4,
[e_2,e_2]=e_3, \quad \alpha\in {\mathbb{C}\backslash\{1\}};$$
$$\rho_{17}: [e_1,e_2]=e_4, [e_2,e_1]=-e_4, [e_3,e_3]= e_4.$$
\end{theorem}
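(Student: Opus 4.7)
This is a quoted classification, so in the paper it would appear only with the reference \cite{AOR}; the sketch below describes how one organises such a classification from scratch.

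First I would stratify by the lower central series. For a nilpotent Leibniz algebra $L$ of dimension $4$, the sequence $d_{k}=\dim L^{k}$ is a discrete isomorphism invariant: $d_{1}=4$, the sequence is strictly decreasing until it hits $0$, and $d_{2}\le 3$ since $L\neq L^{2}$. The first step is to enumerate the admissible profiles $(d_{1},d_{2},d_{3},d_{4})$ and then treat each profile as a separate case.

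Next, in each case I would fix a basis $\{e_{1},e_{2},e_{3},e_{4}\}$ adapted to the filtration, so that only brackets $[e_{i},e_{j}]$ whose output lies in a strictly lower filtration piece can be non-zero. Evaluating the Leibniz identity
\[
[x,[y,z]]=[[x,y],z]-[[x,z],y]
\]
on all triples of basis vectors produces polynomial relations among the remaining structure constants, cutting the parameter space down to a variety on which the group of filtration-preserving changes of basis in $GL(4,\mathbb{C})$ acts. Using rescalings, additions of deeper basis vectors, and linear mixings within each graded piece, one normalises to canonical form; the residual parameters that cannot be absorbed by any such change of basis produce the five one-parameter families, while the remaining cases yield the twelve concrete representatives $\rho_{1},\ldots,\rho_{17}$.

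Finally, pairwise non-isomorphism is verified by comparing invariants of each candidate: the numerical profile $(d_{k})$, the dimensions of the left and right annihilators and of the centre, the derived series data $L^{[k]}$, and Jordan-type data of the left- and right-multiplication operators $L_{e_{i}}$ and $R_{e_{i}}$ on the graded pieces. The main obstacle is the combinatorial bookkeeping within each filtration profile: in the Leibniz setting one loses the antisymmetry of the bracket and so must carry roughly twice as many structure constants as in the Lie case, and one must carefully analyse the residual automorphism action in order to distinguish genuine moduli (as in $\rho_{16}(\alpha)$, where M\"obius-type identifications further reduce the parameter domain) from redundant ones.
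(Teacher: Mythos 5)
This theorem is imported into the paper verbatim from the cited reference \cite{AOR}; the paper itself offers no proof, so there is nothing internal to compare your argument against, and you were right to flag that. Your outline does match the strategy actually used in the source: stratify by the dimensions of the terms of the lower central series (noting that, unlike the Lie case, a $4$-dimensional nilpotent Leibniz algebra can be generated by a single element, so $\dim L^2=3$ is attainable, as in $\rho_1$), choose a basis adapted to the filtration, impose the Leibniz identity on basis triples, and normalise under the group of filtration-preserving base changes, separating genuine moduli from absorbable parameters and certifying non-isomorphism by invariants.

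That said, what you have written is a plan, not a proof. The entire content of the theorem is the claim that this procedure terminates in \emph{exactly} the seventeen families listed --- no more, no fewer --- and that claim can only be established by actually carrying out the case analysis: enumerating the admissible profiles $(d_1,d_2,d_3,d_4)$, solving the polynomial relations coming from the Leibniz identity in each case, exhibiting the base changes that reduce each solution to one of $\rho_1,\dots,\rho_{17}$, and exhibiting invariants separating each pair (including showing that the parameters in $\rho_4,\rho_9,\rho_{10},\rho_{16}$ cannot be normalised away, and identifying the M\"obius-type identifications you allude to for $\rho_{16}(\alpha)$). None of this is performed, so the sketch cannot by itself certify the completeness or the pairwise non-isomorphism of the list. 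In the context of this paper the correct move is simply to cite \cite{AOR}, as the authors do; as a standalone derivation your proposal has a genuine gap at precisely the step where all the work lies.
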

\section{CL-algebras}
In this section, we introduce conditions so that for a Leibniz algebra satisfying these conditions centralizers of each element are ideals. We introduce a subclass of collection of all Leibniz algebras, a member of this subclass is called a CL-algebra. We give some examples of CL-algebras and show that the centralizer of an element in a CL-algebra is an ideal (cf. Theorem \ref{ideal}).
\begin{defn}
Let $L$ be a Leibniz algebra over a field $\mathbb{K}$. We define centralizer of $x\in L$ as follows:
$$C_L(x)=\lbrace y\in L \mid [x,y]=0=[y,x]\rbrace.$$
The right centralizer of $x\in L$ is defined as 
$$C^r_L(x)=\lbrace y\in L \mid [x,y]=0\rbrace.$$
The left centralizer of $x\in L$ is defined as 
$$C^l_L(x)=\lbrace y\in L \mid [y,x]=0\rbrace.$$
So, the centralizer of $x\in L$ is both left and right centralizer of $x$.
\end{defn}
\begin{rmk}
In case of a Lie algebra both the left and the right centralizers are same. Observe that for any $x\in L$, the square element $[x,x]\in C^r_L(x)$ but $[x,x]$ may not belongs to $C^l_L(x)$. For example, consider the algebra with multiplication $[e_3, e_3]=e_1, [e_1,e_3]=e_2$, and all other products are zero. We have $C_L(e_3)=~<e_2>$, and $[e_3, e_3]=e_1 \notin C^l_L(e_3)$.
\end{rmk}
\begin{lemma}
For any Leibniz algebra $L$ and any $x\in L$, $C_L(x)$ is a Leibniz subalgebra.
\end{lemma}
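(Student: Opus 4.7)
The plan is to verify the two closure properties that define a Leibniz subalgebra: closure under linear combinations (which is immediate) and closure under the bracket (which is the substantive content).

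First I would observe that $C_L(x)$ is a linear subspace. Since $[x,-]$ and $[-,x]$ are both $\mathbb{K}$-linear maps $L \to L$, the set $C_L(x) = \Ker[x,-] \cap \Ker[-,x]$ is an intersection of two linear subspaces, hence a subspace.

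The main content is to show that if $y, z \in C_L(x)$, then $[y,z] \in C_L(x)$, i.e.\ $[x,[y,z]] = 0$ and $[[y,z],x] = 0$. For the first, I would apply the Leibniz identity directly:
\[
[x,[y,z]] = [[x,y],z] - [[x,z],y] = [0,z] - [0,y] = 0,
\]
using $[x,y]=0$ and $[x,z]=0$. For the second, I would apply the Leibniz identity with $y$ in the outer left slot:
\[
[y,[z,x]] = [[y,z],x] - [[y,x],z],
\]
so that
\[
[[y,z],x] = [y,[z,x]] + [[y,x],z] = [y,0] + [0,z] = 0,
\]
using $[z,x]=0$ and $[y,x]=0$.

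There is no real obstacle here; the only thing to be careful about is choosing the correct instance of the Leibniz identity for each of the two verifications, since the Leibniz identity is asymmetric in its three slots. Once both displays are written out, the lemma follows.
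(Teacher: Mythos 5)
Your proof is correct and follows essentially the same approach as the paper: verify the subspace property, then use the Leibniz identity to check closure under the bracket. In fact you are slightly more thorough than the paper, which only writes out the computation for $[x,[y_1,y_2]]=0$ and dismisses the remaining verifications (including $[[y_1,y_2],x]=0$, which requires the different instance of the Leibniz identity you supply) with a ``similarly.''
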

\begin{proof}
Let $y_1,y_2\in C_L(x)$. From the bilinearity of bracket operation, $y_1-y_2\in C_L(x)$. Now,
\begin{align*}
[x,[y_1,y_2]]&=[[x,y_1],y_2]-[[x,y_2],y_1]\\
                    &=[0,y_2]-[0,y_1]\\
                    &=0.
\end{align*}
Thus, $[y_1,y_2]\in C_L(x)$. Similarly, $[y_2,y_1]\in C_L(x)$. So, $C_L(x)$ is a Leibniz subalgebra.
\end{proof}
\begin{defn}\label{CL}
A Leibniz algebra $L$ is called a left CL-algebra if it satisfies the following conditions:
\begin{align}
&[[x,a],y]=0,\\
&[[a,x],y]=0, \,\,\,\forall\, y\in C_L(x),~ \text{and}~a,x\in L.
\end{align}
A Leibniz algebra $L$ is called a right CL-algebra if it satisfies the following conditions:
\begin{align}
&[[x,a],y]=0,\\
&[y,[a,x]]=0, \,\,\,\forall\, y\in C_L(x),~ \text{and}~a,x\in L.
\end{align}
\end{defn}
\begin{defn}
A Leibniz algebra $L$ is called a CL-algebra if it is both a left and a right CL-algebra.
\end{defn}
\begin{remark}\label{CL-3}
To verify a Leibniz algebra is a CL-algebra, we need to check the following conditions for all $x\in L$,
\begin{enumerate}
\item $[[x,L],C_L(x)]=0$,
\item $[[L,x],C_L(x)]=0$,
\item $[C_L(x),[L,x]]=0$.
\end{enumerate}
\end{remark}
\begin{example}
Any Abelian Leibniz algebra is a CL-algebra. The following example shows that the converse part may not be true.
\end{example}
\begin{example}
Let $L$ be a vector space of dimension $2$ over a field $F$ and $\lbrace a,b\rbrace$ be a basis of $L$. Define the bracket $[~,~]$ by the following rule:
$[a,a]=b,\,\,\,[b,a]=[b,b]=[a,b]=0$. It is enough to check the conditions of CL-algebras for the basis elements. For the basis elements, we have,
\begin{align*}
&C_L(a)=~<b>,\,\, [[a,L],C_L(a)]=0,\,\, [[L,a],C_L(a)]=0,\,\, [C_L(a),[L,a]]=0;\\
&C_L(b)=L,\,\, [[b,L],C_L(b)]=0,\,\, [[L,b],C_L(b)]=0,\,\, [C_L(b),[L,b]]=0.
\end{align*}
Thus, $L$ is a CL-algebra, which is non-abelian.
%Note that this Leibniz algebra is not perfect.
\end{example}
%\begin{example}
%Suppose Leibniz algebra $L$ is  perfect, that is, $[L,L]=L$, then it is easy to see that $L$ is also a CL-Leibniz algebra.
%\end{example}
%\begin{remark}
%Observe that, if $y\in C_L(x)$ then $[x,y]=0$. So for any $a\in L$, $[[x,y],a]=0$
%\end{remark}
\begin{example}
Any $3$-step nilpotent Leibniz algebra $L$ is a CL-algebra. As $L$ is a $3$-step nilpotent Leibniz algebra, we have $L^3=0$. Thus, by Definition \ref{CL} $L$ is a CL-algebra.
\end{example}
\begin{theorem}\label{ideal}
Suppose $L$ is a Leibniz algebra and $x\in L$. Centralizers $C_L(x)$ in L are left (right) ideal of $L$ if and only if $L$ is a left (right) CL-algebra.
\end{theorem}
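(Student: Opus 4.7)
The plan is to unwind both directions of each equivalence directly from the Leibniz identity, with no extra machinery needed. The arguments for the ``left'' and ``right'' statements are parallel, so I would write out the left case in detail and then indicate the symmetric calculation for the right case.

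First I would rewrite ``$C_L(x)$ is a left ideal of $L$'' as: for every $a\in L$ and every $y\in C_L(x)$, the element $[a,y]$ lies in $C_L(x)$, which by the definition of $C_L(x)$ is the conjunction of $[x,[a,y]]=0$ and $[[a,y],x]=0$. Then I would apply the Leibniz identity to each of these two expressions, using $[x,y]=0=[y,x]$ (from $y\in C_L(x)$) to kill the nuisance terms. Concretely, $[x,[a,y]]=[[x,a],y]-[[x,y],a]$ collapses to $[[x,a],y]$, and writing the Leibniz identity in the form $[a,[y,x]]=[[a,y],x]-[[a,x],y]$ lets me solve for $[[a,y],x]$ and see it collapses to $[[a,x],y]$. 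Comparing with Definition \ref{CL} then gives the left equivalence in both directions at once: $C_L(x)$ is a left ideal for every $x$ iff $[[x,a],y]=0$ and $[[a,x],y]=0$ for all $a,x\in L$ and $y\in C_L(x)$, which is exactly the left CL condition.

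For the right statement the only change is that one expands $[x,[y,a]]$ and $[[y,a],x]$ instead. The same two uses of the Leibniz identity yield $[x,[y,a]]=-[[x,a],y]$ (using $[x,y]=0$) and $[[y,a],x]=[y,[a,x]]$ (using $[y,x]=0$), which matches the definition of a right CL-algebra and again gives both implications simultaneously.

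There is no real obstacle here; the only thing to watch is picking the correct instance of the Leibniz identity so that the bracket one wants to simplify appears as a term one can solve for. The identity as stated in the paper places the nested bracket on the right, so for $[[a,y],x]$ and $[[y,a],x]$ one must re-index and rearrange rather than substitute directly. Once each identity is written in the appropriate symbolic form, every step is a one-line substitution, and no appeal to nilpotency or to the earlier classification theorems is needed.
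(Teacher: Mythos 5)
Your proposal is correct and follows essentially the same route as the paper: expand $[x,[a,y]]$, $[[a,y],x]$ (resp.\ $[x,[y,a]]$, $[[y,a],x]$) via the Leibniz identity and use $[x,y]=[y,x]=0$ to reduce membership of $[a,y]$ (resp.\ $[y,a]$) in $C_L(x)$ to exactly the left (resp.\ right) CL conditions. If anything, your version is more complete, since the paper only writes out the forward implication and dismisses the converse as immediate, whereas your two-way rewriting gives both directions at once.
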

\begin{proof}
Suppose $L$ is a left CL-algebra. Let $y\in C_L(x)$ and $a\in L$. We show that $[a,y]\in C_L(x)$.
\begin{align*}
[[x,[a,y]&=[[x,a],y]-[[x,y],a]\\
             &=-[0,a]=0.
\end{align*}
Similary, using the second equation of left CL-algebra, one can easily show $[[a,y],x]=0$. Thus, $[a,y]\in C_L(x)$. So, $C_L(x)$ is a left ideal of $L$.

Conversely, suppose that $C_L(x)$ is a left ideal of $L$ for all $x\in L$. It is easy to see from the Leibniz identity that $L$ is a left CL-algebra.
By a similar argument $L$ is a right CL-algebra.
\end{proof}
\begin{remark}
Theorem \ref{ideal} gives us a motivation for the conditions of CL-algebras. Note that by the conditions in Definition $3.4$, a Leibniz algebra $L$ is a left (respectively, right) CL-algebra implies
$C_L(x)$ is a left (respectively, right) ideal for all $x\in L$.
\end{remark}
\begin{cor}
For all $x\in L$, centralizers $C_L(x)$ in L are ideals of $L$ if and only if $L$ is a CL-algebra.
\end{cor}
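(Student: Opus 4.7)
The plan is to derive this corollary directly from Theorem \ref{ideal} by unpacking both definitions into their left and right halves and combining the two equivalences. Since an ideal of $L$ is by definition a subspace that is simultaneously a left ideal and a right ideal, and a CL-algebra is by definition a Leibniz algebra that is simultaneously a left CL-algebra and a right CL-algebra, there is no new content to prove beyond invoking the previous theorem twice.

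Concretely, first I would observe that the statement ``$C_L(x)$ is an ideal of $L$ for every $x\in L$'' is logically equivalent to the conjunction ``$C_L(x)$ is a left ideal for every $x\in L$'' and ``$C_L(x)$ is a right ideal for every $x\in L$.'' Next, I would apply Theorem \ref{ideal}: the first conjunct is equivalent to $L$ being a left CL-algebra, and the second to $L$ being a right CL-algebra. Finally, the conjunction of these two properties is, by definition, the statement that $L$ is a CL-algebra.

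There is no substantive obstacle here; the only thing to be careful about is that Theorem \ref{ideal} is stated with a universal quantifier over $x$ built into the notion, so the equivalence transfers cleanly without any additional quantifier juggling. Thus the corollary follows in one line once Theorem \ref{ideal} is in hand.
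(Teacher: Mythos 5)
Your proposal is correct and is precisely the intended derivation: the paper states this corollary without proof as an immediate consequence of Theorem \ref{ideal}, obtained by combining the left and right cases exactly as you describe.
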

\begin{defn}
An element $a\in L$ is said to have the left CL-property if for all $y\in C_L(x)$ and $x\in L$,
\begin{align*}
&[[x,a],y]=0, \\
&[[a,x],y]=0.
\end{align*}
An element $a\in L$ is said to have the right CL-property if for all $y\in C_L(x)$ and $x\in L$, we have
\begin{align*}
&[[x,a],y]=0, \\
&[y,[a,x]]=0.
\end{align*}
\end{defn}
\begin{defn}
An element $a\in L$ is said to have the CL-property if it satisfies both left and right CL-property.
\end{defn}
\begin{rmk}
We call an element having CL-property a CL-element. Note that the additive identity $0$ is a CL-element.
\end{rmk}
\begin{lemma}\label{CL-1}
Suppose $y\in C_L(x)$ and $a\in L$ satisfies the CL-property then $[a,y], [y,a]\in C_L(x)$.
\end{lemma}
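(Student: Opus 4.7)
The plan is to show that each of the four quantities $[x,[a,y]]$, $[[a,y],x]$, $[x,[y,a]]$, $[[y,a],x]$ vanishes, which by definition is exactly what is needed for $[a,y]$ and $[y,a]$ to lie in $C_L(x)$. Everything should follow from a direct application of the Leibniz identity together with the two assumptions: $y\in C_L(x)$ (so $[x,y]=[y,x]=0$) and $a$ is a CL-element (so the four equations in the definitions of left and right CL-property are at our disposal).

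First, I would apply the Leibniz identity in the form $[x,[u,v]]=[[x,u],v]-[[x,v],u]$ to both $[x,[a,y]]$ and $[x,[y,a]]$. In each case one term collapses because $[x,y]=0$, and the other term collapses because it is precisely one of the expressions $[[x,a],y]=0$ coming from the left CL-property of $a$. This handles the ``right centralizer'' half of the condition $[a,y],[y,a]\in C_L(x)$.

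For the ``left centralizer'' half, I would rearrange the Leibniz identity into the form $[[u,v],w]=[u,[v,w]]+[[u,w],v]$. Applied with $(u,v,w)=(a,y,x)$, this expresses $[[a,y],x]$ as $[a,[y,x]]+[[a,x],y]$, both of which vanish: the first because $[y,x]=0$, and the second by the left CL-property equation $[[a,x],y]=0$. Applied with $(u,v,w)=(y,a,x)$, it gives $[[y,a],x]=[y,[a,x]]+[[y,x],a]$; here the first summand is zero by the \emph{right} CL-property $[y,[a,x]]=0$, and the second because $[y,x]=0$.

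There is no real obstacle: the argument is a bookkeeping exercise with the Leibniz identity. The only point worth emphasising in the write-up is that the full CL-property (not just one side) is needed, since the vanishing of $[[y,a],x]$ uses the right CL-condition $[y,[a,x]]=0$ while the vanishing of $[[a,y],x]$ uses the left CL-condition $[[a,x],y]=0$.
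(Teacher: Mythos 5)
Your proof is correct and follows essentially the same route as the paper: the key identity $[[a,y],x]=[a,[y,x]]+[[a,x],y]=0$ is exactly the one computation the paper writes out, and your remaining three cases are the ones the paper dismisses with ``similarly.'' Your observation that the vanishing of $[[y,a],x]$ genuinely requires the right CL-condition $[y,[a,x]]=0$ while $[[a,y],x]$ uses the left condition is a worthwhile detail that the paper leaves implicit.
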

\begin{proof}
Suppose $y\in C_L(x)$ and $a\in L$ satisfies the CL-property. From the Leibniz identity,
$$[[a,y],x]=[a,[y,x]]+[[a,x],y]=0.$$

Similarly, it is easy to check that $[x,[a,y]]=[[y,a],x]=[x,[y,a]]=0$. Thus, $[a,y],[y,a]\in C_L(x)$.
\end{proof}
\begin{theorem}
The collection S of all elements of L satisfying the CL-property
forms a Leibniz subalgebra. Thus, $(S,[~,~])$ is a CL-algebra.
\end{theorem}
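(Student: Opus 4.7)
The plan is to verify two things: first, that $S$ is closed under linear combinations and under the bracket, so it is a Leibniz subalgebra; second, that the resulting algebra $(S,[~,~])$ is itself a CL-algebra. Closure under linear combinations is immediate from $\mathbb{K}$-bilinearity of the bracket, since each of the three conditions of Remark \ref{CL-3} for $a_1+\lambda a_2$ splits into the corresponding linear combination of the conditions for $a_1$ and $a_2$. The CL-algebra property of $S$ will then come for free: if $x\in S$, then $x$ satisfies the CL-property against all of $L$, hence in particular against $a\in S\subseteq L$ and $y\in C_S(x)=C_L(x)\cap S\subseteq C_L(x)$, which gives conditions (1)--(3) of Remark \ref{CL-3} inside $S$.

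The heart of the argument is to show that $[a,b]\in S$ whenever $a,b\in S$. Fix $x\in L$ and $y\in C_L(x)$; the three conditions to verify for $[a,b]$ are
\[
[[x,[a,b]],y]=0,\qquad [[[a,b],x],y]=0,\qquad [y,[[a,b],x]]=0.
\]
The toolkit consists of the CL-property of $a$ and $b$ against the pair $(x,y)$, together with Lemma \ref{CL-1}, which guarantees that $[a,y]$, $[y,a]$, $[b,y]$, $[y,b]$ all lie in $C_L(x)$. For the first condition I would expand $[x,[a,b]]=[[x,a],b]-[[x,b],a]$ by the Leibniz identity, and then rewrite the outer term $[[[x,a],b],y]$ via the rearranged identity $[[u,v],w]=[u,[v,w]]+[[u,w],v]$ as $[[x,a],[b,y]]+[[[x,a],y],b]$. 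The second summand vanishes because $[[x,a],y]=0$ by the CL-property of $a$, and the first vanishes because Lemma \ref{CL-1} places $[b,y]$ in $C_L(x)$, so the CL-property of $a$ applied to the pair $(x,[b,y])$ kills it. The term $[[[x,b],a],y]$ is handled identically with the roles of $a$ and $b$ swapped.

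Conditions (2) and (3) proceed in the same spirit. For (2), expand $[[a,b],x]=[a,[b,x]]+[[a,x],b]$ by Leibniz and then peel each outer bracket $[[-,-],y]$ the same way; each of the four resulting summands is killed either by a direct application of the left CL-property of $a$ or $b$, or by the left CL-property applied to an element of $C_L(x)$ supplied by Lemma \ref{CL-1}. For (3), expand $[[a,b],x]$ as before but peel each $[y,[-,-]]$ using the original form of the Leibniz identity, invoking the right CL-property (which supplies $[y,[a,x]]=[y,[b,x]]=0$) together with Lemma \ref{CL-1}. The main obstacle is not conceptual but purely one of bookkeeping: after two Leibniz expansions, every summand reduces to a bracket of the form $[[x,a],z]$, $[[a,x],z]$, or $[z,[a,x]]$ (or the $b$-analogue), with $z$ either equal to $y$ or produced from $y$ by a single bracket with $a$ or $b$; in every such case the CL-property of $a$ or $b$ applies directly.
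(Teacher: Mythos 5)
Your argument is correct and rests on the same two ingredients as the paper's proof --- the Leibniz identity and Lemma \ref{CL-1} --- but it organizes the computation differently. The paper keeps $z=[a,b]$ atomic: it first applies Lemma \ref{CL-1} twice to conclude $[z,y]=[a,[b,y]]+[[a,y],b]\in C_L(x)$ and likewise $[y,z]\in C_L(x)$, and then each of the three conditions of Remark \ref{CL-3} falls out of a \emph{single} Leibniz expansion (e.g.\ $[[x,z],y]=[x,[z,y]]+[[x,y],z]=0$ because $[z,y]\in C_L(x)$ and $[x,y]=0$). You instead expand $[x,[a,b]]$ (resp.\ $[[a,b],x]$) first and then peel each of the resulting outer brackets, ending with four summands per condition; every summand is indeed of one of the forms $[[x,a],z']$, $[[a,x],z']$, $[z',[a,x]]$ with $z'\in C_L(x)$ supplied by Lemma \ref{CL-1}, so all vanish. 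One small correction to your bookkeeping for condition (2): after expanding $[[a,[b,x]],y]=[a,[[b,x],y]]+[[a,y],[b,x]]$, the summand $[[a,y],[b,x]]$ has the shape $[z',[b,x]]$ with $z'=[a,y]\in C_L(x)$, so it is killed by the \emph{right} CL-property of $b$, not the left one as you state; your concluding sentence already lists this bracket type, so the strategy is unaffected. Your proposal also gains something the paper leaves implicit: you explicitly justify closure under linear combinations (linearity of the defining identities in $a$) and you explain why $S$ is itself a CL-algebra via $C_S(x)=C_L(x)\cap S\subseteq C_L(x)$, both of which are worth recording. The trade-off is that the paper's route is shorter (one expansion per condition instead of two nested ones), while yours avoids having to first prove the auxiliary memberships $[z,y],[y,z]\in C_L(x)$.
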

\begin{proof}
To show $S$ is a Leibniz subalgebra, we need only to check that elements of $S$ are closed under the bracket of $L$. Suppose $a, b\in L$ have the CL-property. The set $S$ is non-empty as $0\in S$.
Suppose $z=[a,b]$ and $y\in C_L(x)$. From the Leibniz identity we have,
\begin{align*}
[[x,z],y]-&[[x,y],z]=[x,[z,y]],\\
&[[x,z],y]=[x,[z,y]].
\end{align*}
Since $a, b$ satisfy CL-property, it follows from Lemma  \ref{CL-1},
$$[z,y]=[[a,b],y]=[a,[b,y]]+[[a,y],b] \in C_L(x).$$ 
Similarly, $[y,z]\in C_L(x)$. We also have,
\begin{align*}
[x,[z,y]]&=0,\\
[[z,x],y]&=[z,[x,y]]-[[z,y],x]=0,\\
             [y,[z,x]]&=[[y,z],x]+[[y,x],z]=0.
\end{align*}
Thus, $z=[a,b]\in S$. Similarly, one can show that $[b,a]\in S$. Therefore, $S$ is a subalgebra of $L$.
\end{proof}
\begin{prop}\label{prop invariant}
Let $L_1$ and $L_2$ be two Leibniz algebras and $f:L_1\to L_2$ be an isomorphism between $L_1$ and $L_2$ then $f(C_L(x))=C_L(f(x))$.
\end{prop}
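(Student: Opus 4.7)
The plan is to establish the set equality $f(C_{L_1}(x))=C_{L_2}(f(x))$ by verifying both inclusions directly from the fact that $f$ is a bijective Leibniz algebra morphism. The notation $C_L$ in the statement should be read as $C_{L_1}$ on the left hand side and $C_{L_2}$ on the right hand side.

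For the inclusion $f(C_{L_1}(x))\subseteq C_{L_2}(f(x))$, I would pick an arbitrary $y\in C_{L_1}(x)$ and apply $f$ to the defining equalities $[x,y]=0=[y,x]$. Since $f$ respects the bracket, this immediately yields $[f(x),f(y)]=f([x,y])=0$ and $[f(y),f(x)]=f([y,x])=0$, which is exactly the statement that $f(y)\in C_{L_2}(f(x))$.

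For the reverse inclusion $C_{L_2}(f(x))\subseteq f(C_{L_1}(x))$, I would take $z\in C_{L_2}(f(x))$ and use the surjectivity of $f$ to write $z=f(y)$ for some $y\in L_1$. The hypothesis $[f(x),f(y)]=0=[f(y),f(x)]$ can then be rewritten as $f([x,y])=0=f([y,x])$ using the morphism property, and the injectivity of $f$ forces $[x,y]=0=[y,x]$ in $L_1$. Hence $y\in C_{L_1}(x)$ and consequently $z=f(y)\in f(C_{L_1}(x))$.

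There is no real obstacle to the argument: it uses only the defining identity of a Leibniz algebra morphism together with the two halves of bijectivity, one for each inclusion. The only point of care is to keep track of which algebra each centralizer is formed in, and to invoke injectivity (rather than just the morphism property) at the step where one cancels $f$ to recover the bracket identities in $L_1$.
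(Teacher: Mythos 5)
Your proposal is correct and follows essentially the same route as the paper: one inclusion via the morphism property, the other via surjectivity plus injectivity. In fact you are slightly more careful than the paper's own proof, which only verifies one of the two bracket conditions $[x,y]=0$ and $[y,x]=0$ in each inclusion.
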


\begin{proof}
Let $z\in f(C_L(x))$. So $z=f(y_1)$ for some $y_1\in C_L(x)$. Now,
\begin{align*}
[z, f(x)]=[f(y_1), f(x)]=f([y_1, x])= f(0)=0.
\end{align*}
Thus, $z\in C_L(f(x))$ and $f(C_L(x))\subseteq C_L(f(x))$.

Suppose $z\in C_L(f(x))$. This implies $[f(x), z]=0$. As $f$ is an isomorphism, $z=f(y_2)$ for some $y_2\in L_1$. Now,
\begin{align*}
&[f(x), z]=0,\\
&[f(x), f(y_2)]=0,\\
&f([x, y_2])=0,\\
&[x,y_2]=0,~~~\text{Since $f$ is an isomorphism}.
\end{align*}
This implies $y_2\in C_L(x)$ and $f(y_2)\in f(C_L(x))$. So, $C_L(f(x))\subseteq f(C_L(x))$. Therefore, $f(C_L(x))=C_L(f(x))$.
\end{proof}

\begin{theorem}\label{invariant}
Let $L_1$ and $L_2$ be two Leibniz algebras and $f:L_1\to L_2$ be an isomorphism between $L_1$ and $L_2$. If $a\in L_1$ has the CL-property then $f(a)$ also has the CL-property in $L_2$.
\end{theorem}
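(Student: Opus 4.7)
The plan is to transfer the CL-property along the isomorphism $f$ by using the homomorphism property to move brackets in and out of $f$, together with Proposition~\ref{prop invariant} to identify centralizers in $L_2$ with images of centralizers in $L_1$.

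First, I would take arbitrary $x' \in L_2$ and $y' \in C_{L_2}(x')$, and aim to verify the three defining identities for the CL-property of $f(a)$, namely
\[
[[x',f(a)],y']=0,\quad [[f(a),x'],y']=0,\quad [y',[f(a),x']]=0.
\]
Since $f$ is an isomorphism it is surjective, so there exists $x \in L_1$ with $x' = f(x)$. By Proposition~\ref{prop invariant}, $C_{L_2}(f(x)) = f(C_{L_1}(x))$, so I can write $y' = f(y)$ for some $y \in C_{L_1}(x)$.

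Next, I would apply the fact that $f$ preserves brackets to each of the three expressions. For instance,
\[
[[x',f(a)],y'] = [[f(x),f(a)],f(y)] = f\bigl([[x,a],y]\bigr),
\]
and since $a$ has the CL-property in $L_1$ and $y \in C_{L_1}(x)$, the inner expression $[[x,a],y]$ vanishes, so the whole thing is $f(0)=0$. The same manipulation handles $[[f(a),x'],y'] = f([[a,x],y]) = 0$ and $[y',[f(a),x']] = f([y,[a,x]]) = 0$, both using the CL-property of $a$ in $L_1$.

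There is no real obstacle here; the proof is essentially bookkeeping. The one point that requires the preliminary Proposition rather than just the homomorphism property is the passage from ``$y'$ centralizes $x'$ in $L_2$'' to ``$y'$ is the image of some $y$ that centralizes $x$ in $L_1$,'' which is exactly where surjectivity of $f$ (and hence the full force of being an isomorphism, not merely a homomorphism) is used.
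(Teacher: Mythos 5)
Your proof is correct and follows essentially the same route as the paper's: push the brackets through $f$, invoke the CL-property of $a$ in $L_1$, and use Proposition~\ref{prop invariant} to handle centralizers. You are in fact slightly more careful than the paper, which computes directly with $f(x)$ and $f(y)$ for $y\in C_{L_1}(x)$ without explicitly noting that surjectivity of $f$ guarantees every $x'\in L_2$ and every $y'\in C_{L_2}(x')$ arises in this form.
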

\begin{proof}
Suppose $a\in L_1$ has the CL-property.                                                                                                                                                                                                                                                                                                                                                                                                                                          For $y\in C_L(x)$ and using Proposition \ref{prop invariant}, we have,
\begin{align*}
&[[f(x),f(a)],f(y)]=[f([x,a]),f(y)]=f([[x,a],y])=f(0)=0,\\
&[[f(a),f(x)],f(y)]=[f([a,x]),f(y)]=f([[a,x],y])=f(0)=0,\\
&[f(y),[f(a),f(x)]]=[f(y),f([a,x])]=f([y,[a,x]])=f(0)=0.
\end{align*}
Thus, the set of all CL-elements are preserved under the isomorphism of Leibniz algebras.
\end{proof}
\begin{remark}
The Theorem \ref{invariant} may be used to check whether a Leibniz morphism is an isomorphism or not.
\end{remark}
%\section{Differences with Lie algebra}
%\begin{defn}
%Let $L$ be a two dimensional Leibniz algebra generated by the basis $\lbrace e_1,e_2\rbrace$ such that $[e_1,e_2]=[e_2,e_2]=e_1$, $[e_1,e_1]=[e_2,e_1]=0$. Then
%\begin{align*}
%&C_L(e_1)=<e_1>,\,\,\,C_L(e_2)={0},\\
%&C_L(\alpha e_1+\beta e_2)=<e_1>.
%\end{align*}
%Thus $|Cent(L)|=3$. For non-abelian Lie algebras $|Cent(L)|\geq 4$, see \cite{amm-ej-makh}.
%\begin{example}
%Let $L$ be a vector space of dimension $2$ over a field $F$ and $\lbrace a,b\rbrace$ be a basis of $L$. Define the operation $[,]$ by the following rule:

%$[a,a]=b,\,\,\,[b.a]=[b,b]=[a,b]=0$.

%Easy to check that $(L,[,])$ is a Leibniz algebra.
%\end{example}
\section{Nilpotency  of CL-algebras}
\label{nil} In this section, we study how nilpotent Leibniz algebras are related to CL-algebras. We attempt to answer the following questions:
\begin{question}\label{q1}
Is every finite dimensional CL-algebra a nilpotent Leibniz algebra?
\end{question}
\begin{question}\label{q2}
Is every finite dimensional nilpotent Leibniz algebra a CL-algebra?
\end{question}
To answer Question \ref{q1}, we consider the following example.

Let $\mathbb{K}$ denotes the complex field and $L$ is a $3$ dimensional Leibniz algebra generated by the basis elements $\lbrace e_1, e_2, e_3\rbrace$ with bracket $[e_3, e_3]=e_1,~[e_3,e_2]=e_2,~[e_2, e_3]=~-e_2$ and all other product are $0$. Now,
\begin{align*}
& L^2=~<e_1,e_2>,\\
& L^3= L^4=\cdots=~<e_2>.
\end{align*}
Thus, $L$ is not nilpotent but $L$ is a CL-algebra as $C_L(e_1)=~<e_1, e_2, e_3>$, $C_L(e_2)=~<e_1, e_2>$, $C_L(e_3)=~<e_1>$.  Now,
\begin{align*}
&[[e_1,L],C_L(e_1)]=0,\,\, [[L,e_1],C_L(e_1)]=0,\,\, [C_L(e_1),[L,e_1]]=0 ;\\
& [[e_2,L],C_L(e_2)]=0,\,\, [[L,e_2],C_L(e_2)]=0,\,\, [C_L(e_2),[L,e_2]]=0 ;\\
& [[e_3,L],C_L(e_3)]=0,\,\, [[L,e_3],C_L(e_3)]=0,\,\, [C_L(e_3),[L,e_3]]=0.
\end{align*}
Thus not all CL-algebras are nilpotent.

To answer Question \ref{q2}, we consider the following theorem.
\begin{thm}\label{main theorem nil}
Nilpotent complex Leibniz algebras up to dimension $4$ are CL-algebras.
\end{thm}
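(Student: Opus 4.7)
The plan is to combine an easy structural reduction with a case-by-case verification over the classification given in Theorems \ref{dim2}, \ref{dim3}, and \ref{dim4}.

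First I would record the following reduction: if $L$ is $3$-step nilpotent (that is, $L^3=0$), then $L$ is automatically a CL-algebra. Indeed $[x,L]$ and $[L,x]$ both lie in $L^2$, so the first two brackets in Remark \ref{CL-3} are contained in $[L^2,L]=L^3=0$. For the third, the Leibniz identity gives $[y,[b,x]]=[[y,b],x]-[[y,x],b]$, and both summands lie in $L^3=0$. This is essentially the observation already used in the example preceding the theorem.

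I would then scan the classification. A direct computation of $L^3=[L^2,L]$ shows that every algebra listed in Theorems \ref{dim2}, \ref{dim3}, \ref{dim4} is $3$-step nilpotent \emph{except} $\lambda_6$ (in dimension three) and $\rho_1,\rho_2,\rho_3,\rho_4(\alpha),\rho_5,\rho_6$ (in dimension four). By the reduction, all of the $3$-step examples are CL-algebras with nothing further to check.

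For the remaining seven algebras I would verify the CL-conditions directly. For each, fix a general $x=\sum a_ie_i$, compute $[x,y]$ and $[y,x]$ for $y=\sum b_ie_i$, and determine $C_L(x)$ by a short case-split on which coordinates $a_i$ vanish. The recurring feature of each of these seven algebras is that the terminal basis vector ($e_3$ in $\lambda_6$, $e_4$ in the four-dimensional cases) is a two-sided annihilator of $L$ and hence always lies in $C_L(x)$; moreover, whenever the ``leading'' coordinate $a_1$ is nonzero, $C_L(x)$ collapses to a subspace of this annihilating part. The three bracket conditions of Remark \ref{CL-3} then reduce to pairing an element of $L^2$ against an annihilator, which is zero.

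The main obstacle is the bookkeeping for $\rho_1$, whose brackets $[e_1,e_1]=e_2,\ [e_2,e_1]=e_3,\ [e_3,e_1]=e_4$ form the longest chain and yield the richest subcase analysis (according to which of $a_1,a_2,a_3$ vanish). Nevertheless, in every subcase one finds $C_L(x)=\langle e_4\rangle$ when $a_1\ne 0$ and $C_L(x)\supseteq\langle e_2,e_3,e_4\rangle$ when $a_1=0$; both alternatives make the conditions of Remark \ref{CL-3} collapse by the annihilator observation above.
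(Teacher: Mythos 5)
Your reduction is attractive: the observation that $L^3=0$ forces the three conditions of Remark \ref{CL-3} to hold (the third via $[y,[a,x]]=[[y,a],x]-[[y,x],a]\in L^3$) is correct, and your list of the seven algebras not covered by it ($\lambda_6$ and $\rho_1,\dots,\rho_6$) is also correct. The fatal problem is in the final verification step. The claim that $C_L(x)$ collapses into the annihilator $\langle e_4\rangle$ whenever $a_1\neq 0$ is false for several of these algebras, because they contain elements $x$ outside $L^2$ with $[x,x]=0$, and then $x$ itself lies in $C_L(x)$. Concretely, in $\rho_3$ take $x=e_1-e_2$: then $[x,x]=[e_1,e_1]-[e_2,e_1]=e_3-e_3=0$, so $x\in C_L(x)$, while $[e_1,x]=e_3$ and
$$[[e_1,x],x]=[e_3,e_1-e_2]=[e_3,e_1]=e_4\neq 0.$$
This violates condition (2) of Remark \ref{CL-3}; equivalently $C_L(x)$ is not a left ideal, since $[e_1,x]=e_3\notin C_L(x)$. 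The same phenomenon occurs in $\rho_2$ with $x=e_1-e_2+e_3$ and in $\rho_4(\alpha)$ with $x=e_1-e_2$ when $\alpha=1$ (resp.\ $x=e_1-e_2-e_3$ when $\alpha=0$), whereas $\lambda_6$, $\rho_1$, $\rho_5$ and $\rho_6$ do survive a full check.

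So this is not a repairable bookkeeping slip: with the definitions as stated (the CL-conditions are quantified over \emph{all} $x\in L$), the algebras $\rho_2$, $\rho_3$ and $\rho_4(\alpha)$ from Theorem \ref{dim4} are not CL-algebras, and the statement you are trying to prove fails in dimension four. For comparison, the paper's own proof is a pure tabulation that evaluates the conditions only at the basis vectors $e_1,\dots,e_4$; since $C_L(\sum a_ie_i)$ is not determined by the $C_L(e_i)$, that check is insufficient, and your (correct) instinct to work with a general $x=\sum a_ie_i$ is precisely what exposes the counterexamples. If you carry out your own subcase analysis honestly for $\rho_2$, $\rho_3$ and $\rho_4(\alpha)$, you will reach a refutation rather than a proof; the most you can salvage along these lines is the result for dimensions at most three together with the fourteen four-dimensional families other than $\rho_2$, $\rho_3$, $\rho_4(\alpha)$.
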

\begin{proof}
Suppose $L$ is a finite dimensional complex Leibniz algebra. We consider the following four cases:\\

\underline{Case:1} 

Suppose the dimension of $L$ is $1$. So, $L$ is abelian and any abelian Leibniz algebra is automatically a CL-algebra.\\

\underline{Case:2}

Suppose the dimension of $L$ is $2$. By Theorem \ref{dim2}, up to isomorphisms there are only two nilpotent Leibniz algebras of dimension $2$. 

\resizebox{1.0 \linewidth}{!}{
  \begin{minipage}{\linewidth}
  
\begin{align*}
\mu_1:~&C_L(e_1)=<e_2>,\,\, [[e_1,L],C_L(e_1)]=0,\,\,[[L,e_1],C_L(e_1)]=0,\,\, [C_L(e_1),[L,e_1]]=0;\\
           &C_L(e_2)=<e_1,e_2>=L,\,\,[[e_2,L],C_L(e_2)]=0,\,\, [[L,e_2],C_L(e_2)]=0,\,\,[C_L(e_2),[L,e_2]]=0.
\end{align*}
\end{minipage}
}

Therefore, two dimensional nilpotent Leibniz algebras are CL-algebras.\\

\underline{Case:3}

Suppose the dimension of $L$ is $3$. By Theorem \ref{dim3}, upto isomorphisms there are only six nilpotent Leibniz algebras of dimension $3$. 

\resizebox{1.0 \linewidth}{!}{
  \begin{minipage}{\linewidth}
\begin{align*}
\lambda_2:~&C_L(e_1)=<e_2,e_3>,\,\, [[e_1,L],C_L(e_1)]=0,\,\, [[L,e_1],C_L(e_1)]=0,\,\, [C_L(e_1),[L,e_1]]=0,\\
                   & C_L(e_2)=<e_1,e_2,e_3>=L,\,\,[[e_2,L],C_L(e_2)]=0,\,\, [[L,e_2],C_L(e_2)]=0,\,\, [C_L(e_2),[L,e_2]]=0,\\
                   &C_L(e_3)=<e_1,e_2,e_3>,\,\,[[e_3,L],C_L(e_3)]=0,\,\, [[L,e_3],C_L(e_3)]=0,\,\, [C_L(e_3),[L,e_3]]=0.
\end{align*}
\begin{align*}
\lambda_3:~&C_L(e_1)=<e_1,e_3>,\,\, [[e_1,L],C_L(e_1)]=0,\,\, [[L,e_1],C_L(e_1)]=0,\,\, [C_L(e_1),[L,e_1]]=0,\\
                  & C_L(e_2)=<e_2,e_3>,\,\,[[e_2,L],C_L(e_2)]=0,\,\, [[L,e_2],C_L(e_2)]=0,\,\, [C_L(e_2),[L,e_2]]=0,\\
                  &C_L(e_3)=<e_1,e_2,e_3>,\,\,[[e_3,L],C_L(e_3)]=0,\,\, [[L,e_3],C_L(e_3)]=0,\,\, [C_L(e_3),[L,e_3]]=0.
\end{align*}
\begin{align*}
\lambda_4(\alpha):~&C_L(e_1)=<e_3>,\,\, [[e_1,L],C_L(e_1)]=0,\,\, [[L,e_1],C_L(e_1)]=0,\,\, [C_L(e_1),[L,e_1]]=0,\\
                  & C_L(e_2)=<e_3>,\,\,[[e_2,L],C_L(e_2)]=0,\,\, [[L,e_2],C_L(e_2)]=0,\,\, [C_L(e_2),[L,e_2]]=0,\\
                  &C_L(e_3)=<e_1,e_2,e_3>,\,\,[[e_3,L],C_L(e_3)]=0,\,\, [[L,e_3],C_L(e_3)]=0,\,\, [C_L(e_3),[L,e_3]]=0.
\end{align*}
\begin{align*}
\lambda_5:~&C_L(e_1)=<e_1,e_3>,\,\, [[e_1,L],C_L(e_1)]=0,\,\, [[L,e_1],C_L(e_1)]=0,\,\, [C_L(e_1),[L,e_1]]=0,\\
                  & C_L(e_2)=<e_2,e_3>,\,\,[[e_2,L],C_L(e_2)]=0,\,\, [[L,e_2],C_L(e_2)]=0,\,\, [C_L(e_2),[L,e_2]]=0,\\
                  &C_L(e_3)=<e_1,e_2,e_3>,\,\,[[e_3,L],C_L(e_3)]=0,\,\, [[L,e_3],C_L(e_3)]=0,\,\, [C_L(e_3),[L,e_3]]=0.
\end{align*}
\begin{align*}
\lambda_6:~&C_L(e_1)=<e_3>,\,\, [[e_1,L],C_L(e_1)]=0,\,\, [[L,e_1],C_L(e_1)]=0,\,\, [C_L(e_1),[L,e_1]]=0,\\
                  & C_L(e_2)=<e_2,e_3>,\,\,[[e_2,L],C_L(e_2)]=0,\,\, [[L,e_2],C_L(e_2)]=0,\,\, [C_L(e_2),[L,e_2]]=0,\\
                  &C_L(e_3)=<e_1,e_2,e_3>,\,\,[[e_3,L],C_L(e_3)]=0,\,\, [[L,e_3],C_L(e_3)]=0,\,\, [C_L(e_3),[L,e_3]]=0.
\end{align*}
\end{minipage}
}

Therefore, three dimensional nilpotent Leibniz algebras are CL-algebras.\\

\underline{Case:4}

 Suppose $L$ is four dimensional and $L=<e_1,e_2,e_3,e_4>$. By the classification  Theorem \ref{dim4} of nilpotent Leibniz algebras of dimension four, we have the following seventeen  non-isomorphic classes:
 
 \resizebox{1.0 \linewidth}{!}{
  \begin{minipage}{\linewidth}
\begin{align*}
\rho_1:~&C_L(e_1)=<e_4>,\,\, [[e_1,L],C_L(e_1)]=0,\,\, [[L,e_1],C_L(e_1)]=0,\,\, [C_L(e_1),[L,e_1]]=0,\\
                   & C_L(e_2)=<e_2,e_3,e_4>,\,\,[[e_2,L],C_L(e_2)]=0,\,\, [[L,e_2],C_L(e_2)]=0,\,\, [C_L(e_2),[L,e_2]]=0,\\
                   &C_L(e_3)=<e_2,e_3,e_4>,\,\,[[e_3,L],C_L(e_3)]=0,\,\, [[L,e_3],C_L(e_3)]=0,\,\, [C_L(e_3),[L,e_3]]=0,\\
                    &C_L(e_4)=<e_1,e_2,e_3,e_4>,\,\,[[e_4,L],C_L(e_4)]=0,\,\, [[L,e_4],C_L(e_4)]=0,\,\, [C_L(e_4),[L,e_4]]=0.
\end{align*}
\begin{align*}
\rho_2:~&C_L(e_1)=<e_4>,\,\, [[e_1,L],C_L(e_1)]=0,\,\, [[L,e_1],C_L(e_1)]=0,\,\, [C_L(e_1),[L,e_1]]=0,\\
                   & C_L(e_2)=<e_2,e_3,e_4>,\,\,[[e_2,L],C_L(e_2)]=0,\,\, [[L,e_2],C_L(e_2)]=0,\,\, [C_L(e_2),[L,e_2]]=0,\\
                   &C_L(e_3)=<e_2,e_3,e_4>,\,\,[[e_3,L],C_L(e_3)]=0,\,\, [[L,e_3],C_L(e_3)]=0,\,\, [C_L(e_3),[L,e_3]]=0,\\
                    &C_L(e_4)=<e_1,e_2,e_3,e_4>,\,\,[[e_4,L],C_L(e_4)]=0,\,\, [[L,e_4],C_L(e_4)]=0,\,\, [C_L(e_4),[L,e_4]]=0.
\end{align*}
\end{minipage}
}

\resizebox{1.0 \linewidth}{!}{
  \begin{minipage}{\linewidth}
\begin{align*}
\rho_3:~&C_L(e_1)=<e_4>,\,\, [[e_1,L],C_L(e_1)]=0,\,\, [[L,e_1],C_L(e_1)]=0,\,\, [C_L(e_1),[L,e_1]]=0,\\
                   & C_L(e_2)=<e_2,e_3,e_4>,\,\,[[e_2,L],C_L(e_2)]=0,\,\, [[L,e_2],C_L(e_2)]=0,\,\, [C_L(e_2),[L,e_2]]=0,\\
                   &C_L(e_3)=<e_2,e_3,e_4>,\,\,[[e_3,L],C_L(e_3)]=0,\,\, [[L,e_3],C_L(e_3)]=0,\,\, [C_L(e_3),[L,e_3]]=0,\\
                    &C_L(e_4)=<e_1,e_2,e_3,e_4>,\,\,[[e_4,L],C_L(e_4)]=0,\,\, [[L,e_4],C_L(e_4)]=0,\,\, [C_L(e_4),[L,e_4]]=0.
\end{align*}
\begin{align*}
\rho_4(\alpha):~&C_L(e_1)=<e_4>,\,\, [[e_1,L],C_L(e_1)]=0,\,\, [[L,e_1],C_L(e_1)]=0,\,\, [C_L(e_1),[L,e_1]]=0,\\
                   & C_L(e_2)=<e_3,e_4>,\,\,[[e_2,L],C_L(e_2)]=0,\,\, [[L,e_2],C_L(e_2)]=0,\,\, [C_L(e_2),[L,e_2]]=0,\\
                   &C_L(e_3)=<e_2,e_3,e_4>,\,\,[[e_3,L],C_L(e_3)]=0,\,\, [[L,e_3],C_L(e_3)]=0,\,\, [C_L(e_3),[L,e_3]]=0,\\
                    &C_L(e_4)=<e_1,e_2,e_3,e_4>,[[e_4,L],C_L(e_4)]=0, [[L,e_4],C_L(e_4)]=0, [C_L(e_4),[L,e_4]]=0.
\end{align*}
\begin{align*}
\rho_5:~&C_L(e_1)=<e_4>,\,\, [[e_1,L],C_L(e_1)]=0,\,\, [[L,e_1],C_L(e_1)]=0,\,\, [C_L(e_1),[L,e_1]]=0,\\
                   & C_L(e_2)=<e_2,e_3,e_4>,\,\,[[e_2,L],C_L(e_2)]=0,\,\, [[L,e_2],C_L(e_2)]=0,\,\, [C_L(e_2),[L,e_2]]=0,\\
                   &C_L(e_3)=<e_2,e_3,e_4>,\,\,[[e_3,L],C_L(e_3)]=0,\,\, [[L,e_3],C_L(e_3)]=0,\,\, [C_L(e_3),[L,e_3]]=0,\\
                    &C_L(e_4)=<e_1,e_2,e_3,e_4>,\,\,[[e_4,L],C_L(e_4)]=0,\,\, [[L,e_4],C_L(e_4)]=0,\,\, [C_L(e_4),[L,e_4]]=0.
\end{align*}
\begin{align*}
\rho_6:~&C_L(e_1)=<e_2,e_4>,\,\, [[e_1,L],C_L(e_1)]=0,\,\, [[L,e_1],C_L(e_1)]=0,\,\, [C_L(e_1),[L,e_1]]=0,\\
                   & C_L(e_2)=<e_1,e_3,e_4>,\,\,[[e_2,L],C_L(e_2)]=0,\,\, [[L,e_2],C_L(e_2)]=0,\,\, [C_L(e_2),[L,e_2]]=0,\\
                   &C_L(e_3)=<e_2,e_3,e_4>=L,\,\,[[e_3,L],C_L(e_3)]=0,\,\, [[L,e_3],C_L(e_3)]=0,\,\, [C_L(e_3),[L,e_3]]=0,\\
                    &C_L(e_4)=<e_1,e_2,e_3,e_4>,\,\,[[e_4,L],C_L(e_4)]=0,\,\, [[L,e_4],C_L(e_4)]=0,\,\, [C_L(e_4),[L,e_4]]=0.
\end{align*}
\begin{align*}
\rho_7:~&C_L(e_1)=<e_3,e_4>,\,\, [[e_1,L],C_L(e_1)]=0,\,\, [[L,e_1],C_L(e_1)]=0,\,\, [C_L(e_1),[L,e_1]]=0,\\
                   & C_L(e_2)=<e_3,e_4>,\,\,[[e_2,L],C_L(e_2)]=0,\,\, [[L,e_2],C_L(e_2)]=0,\,\, [C_L(e_2),[L,e_2]]=0,\\
                   &C_L(e_3)=<e_1,e_2,e_3,e_4>,\,\,[[e_3,L],C_L(e_3)]=0,\,\, [[L,e_3],C_L(e_3)]=0,\,\, [C_L(e_3),[L,e_3]]=0,\\
                    &C_L(e_4)=<e_1,e_2,e_3,e_4>,\,\,[[e_4,L],C_L(e_4)]=0,\,\, [[L,e_4],C_L(e_4)]=0,\,\, [C_L(e_4),[L,e_4]]=0.
\end{align*}
\begin{align*}
\rho_8:~&C_L(e_1)=<e_1,e_3,e_4>,\,\, [[e_1,L],C_L(e_1)]=0,\,\, [[L,e_1],C_L(e_1)]=0,\,\, [C_L(e_1),[L,e_1]]=0,\\
                   & C_L(e_2)=<e_3,e_4>,\,\,[[e_2,L],C_L(e_2)]=0,\,\, [[L,e_2],C_L(e_2)]=0,\,\, [C_L(e_2),[L,e_2]]=0,\\
                   &C_L(e_3)=<e_1,e_2,e_3,e_4>,\,\,[[e_3,L],C_L(e_3)]=0,\,\, [[L,e_3],C_L(e_3)]=0,\,\, [C_L(e_3),[L,e_3]]=0,\\
                    &C_L(e_4)=<e_1,e_2,e_3,e_4>,\,\,[[e_4,L],C_L(e_4)]=0,\,\, [[L,e_4],C_L(e_4)]=0,\,\, [C_L(e_4),[L,e_4]]=0.
\end{align*}
\begin{align*}
\rho_9(\alpha):~&C_L(e_1)=<e_3,e_4>,\,\, [[e_1,L],C_L(e_1)]=0,\,\, [[L,e_1],C_L(e_1)]=0,\,\, [C_L(e_1),[L,e_1]]=0,\\
                   & C_L(e_2)=<e_3,e_4>,\,\,[[e_2,L],C_L(e_2)]=0,\,\, [[L,e_2],C_L(e_2)]=0,\,\, [C_L(e_2),[L,e_2]]=0,\\
                   &C_L(e_3)=<e_1,e_2,e_3,e_4>,[[e_3,L],C_L(e_3)]=0, [[L,e_3],C_L(e_3)]=0,[C_L(e_3),[L,e_3]]=0,\\
                     &C_L(e_4)=<e_1,e_2,e_3,e_4>,[[e_4,L],C_L(e_4)]=0, [[L,e_4],C_L(e_4)]=0,[C_L(e_4),[L,e_4]]=0.
\end{align*}
\begin{align*}
\rho_{10}(\alpha):~&C_L(e_1)=<e_3,e_4>,\,\, [[e_1,L],C_L(e_1)]=0,\,\, [[L,e_1],C_L(e_1)]=0,\,\, [C_L(e_1),[L,e_1]]=0,\\
                   & C_L(e_2)=<e_3,e_4>,\,\,[[e_2,L],C_L(e_2)]=0,\,\, [[L,e_2],C_L(e_2)]=0,\,\, [C_L(e_2),[L,e_2]]=0,\\
                   &C_L(e_3)=<e_1,e_2,e_4>,\,\,[[e_3,L],C_L(e_3)]=0,\,\, [[L,e_3],C_L(e_3)]=0,\,\, [C_L(e_3),[L,e_3]]=0,\\
                     &C_L(e_4)=<e_1,e_2,e_3,e_4>,[[e_4,L],C_L(e_4)]=0,[[L,e_4],C_L(e_4)]=0,[C_L(e_4),[L,e_4]]=0.
\end{align*}
\begin{align*}
\rho_{11}:~&C_L(e_1)=<e_1,e_4>,\,\, [[e_1,L],C_L(e_1)]=0,\,\, [[L,e_1],C_L(e_1)]=0,\,\, [C_L(e_1),[L,e_1]]=0,\\
                   & C_L(e_2)=<e_3,e_4>,\,\,[[e_2,L],C_L(e_2)]=0,\,\, [[L,e_2],C_L(e_2)]=0,\,\, [C_L(e_2),[L,e_2]]=0,\\
                   &C_L(e_3)=<e_2,e_3,e_4>,\,\,[[e_3,L],C_L(e_3)]=0,\,\, [[L,e_3],C_L(e_3)]=0,\,\, [C_L(e_3),[L,e_3]]=0,\\
                     &C_L(e_4)=<e_1,e_2,e_3,e_4>,\,\,[[e_4,L],C_L(e_4)]=0,\,\, [[L,e_4],C_L(e_4)]=0,\,\, [C_L(e_4),[L,e_4]]=0.
\end{align*}
\begin{align*}
\rho_{12}:~&C_L(e_1)=<e_3,e_4>,\,\, [[e_1,L],C_L(e_1)]=0,\,\, [[L,e_1],C_L(e_1)]=0,\,\, [C_L(e_1),[L,e_1]]=0,\\
                   & C_L(e_2)=<e_2,e_3,e_4>,\,\,[[e_2,L],C_L(e_2)]=0,\,\, [[L,e_2],C_L(e_2)]=0,\,\, [C_L(e_2),[L,e_2]]=0,\\
                   &C_L(e_3)=<e_1,e_2,e_4>,\,\,[[e_3,L],C_L(e_3)]=0,\,\, [[L,e_3],C_L(e_3)]=0,\,\, [C_L(e_3),[L,e_3]]=0,\\
                     &C_L(e_4)=<e_1,e_2,e_3,e_4>,\,\,[[e_4,L],C_L(e_4)]=0,\,\, [[L,e_4],C_L(e_4)]=0,\,\, [C_L(e_4),[L,e_4]]=0.
\end{align*}
\end{minipage}
}

\resizebox{1.0 \linewidth}{!}{
  \begin{minipage}{\linewidth}
\begin{align*}
\rho_{13}:~&C_L(e_1)=<e_1,e_3,e_4>,\,\, [[e_1,L],C_L(e_1)]=0,\,\, [[L,e_1],C_L(e_1)]=0,\,\, [C_L(e_1),[L,e_1]]=0,\\
                   & C_L(e_2)=<e_2,e_3,e_4>,\,\,[[e_2,L],C_L(e_2)]=0,\,\, [[L,e_2],C_L(e_2)]=0,\,\, [C_L(e_2),[L,e_2]]=0,\\
                   &C_L(e_3)=<e_1,e_2,e_3,e_4>,\,\,[[e_3,L],C_L(e_3)]=0,\,\, [[L,e_3],C_L(e_3)]=0,\,\, [C_L(e_3),[L,e_3]]=0,\\
                     &C_L(e_4)=<e_1,e_2,e_3,e_4>,\,\,[[e_4,L],C_L(e_4)]=0,\,\, [[L,e_4],C_L(e_4)]=0,\,\, [C_L(e_4),[L,e_4]]=0.
\end{align*}
\begin{align*}
\rho_{14}:~&C_L(e_1)=<e_1,,e_3,e_4>,\,\, [[e_1,L],C_L(e_1)]=0,\,\, [[L,e_1],C_L(e_1)]=0,\,\, [C_L(e_1),[L,e_1]]=0,\\
                   & C_L(e_2)=<e_3,e_4>,\,\,[[e_2,L],C_L(e_2)]=0,\,\, [[L,e_2],C_L(e_2)]=0,\,\, [C_L(e_2),[L,e_2]]=0,\\
                   &C_L(e_3)=<e_1,e_2,e_3,e_4>,\,\,[[e_3,L],C_L(e_3)]=0,\,\, [[L,e_3],C_L(e_3)]=0,\,\, [C_L(e_3),[L,e_3]]=0,\\
                     &C_L(e_4)=<e_1,e_2,e_3,e_4>,\,\,[[e_4,L],C_L(e_4)]=0,\,\, [[L,e_4],C_L(e_4)]=0,\,\, [C_L(e_4),[L,e_4]]=0.
\end{align*}
\begin{align*}
\rho_{15}:~&C_L(e_1)=<e_1,e_3,e_4>,\,\, [[e_1,L],C_L(e_1)]=0,\,\, [[L,e_1],C_L(e_1)]=0,\,\, [C_L(e_1),[L,e_1]]=0,\\
                   & C_L(e_2)=<e_3,e_4>,\,\,[[e_2,L],C_L(e_2)]=0,\,\, [[L,e_2],C_L(e_2)]=0,\,\, [C_L(e_2),[L,e_2]]=0,\\
                   &C_L(e_3)=<e_1,e_2,e_3,e_4>,\,\,[[e_3,L],C_L(e_3)]=0,\,\, [[L,e_3],C_L(e_3)]=0,\,\, [C_L(e_3),[L,e_3]]=0,\\
                     &C_L(e_4)=<e_1,e_2,e_3,e_4>,\,\,[[e_4,L],C_L(e_4)]=0,\,\, [[L,e_4],C_L(e_4)]=0,\,\, [C_L(e_4),[L,e_4]]=0.
\end{align*}
\begin{align*}
\rho_{16}(\alpha):~&C_L(e_1)=<e_1,e_3,e_4>,\,\, [[e_1,L],C_L(e_1)]=0,\,\, [[L,e_1],C_L(e_1)]=0,\,\, [C_L(e_1),[L,e_1]]=0,\\
                   & C_L(e_2)=<e_3,e_4>,\,\,[[e_2,L],C_L(e_2)]=0,\,\, [[L,e_2],C_L(e_2)]=0,\,\, [C_L(e_2),[L,e_2]]=0,\\
                   &C_L(e_3)=<e_1,e_2,e_3,e_4>,\,[[e_3,L],C_L(e_3)]=0, [[L,e_3],C_L(e_3)]=0,[C_L(e_3),[L,e_3]]=0,\\
                     &C_L(e_4)=<e_1,e_2,e_3,e_4>,[[e_4,L],C_L(e_4)]=0, [[L,e_4],C_L(e_4)]=0,[C_L(e_4),[L,e_4]]=0.
\end{align*}
\end{minipage}
}

\resizebox{1.0 \linewidth}{!}{
  \begin{minipage}{\linewidth}
\begin{align*}
\rho_{17}:~&C_L(e_1)=<e_1,e_3,e_4>,\,\, [[e_1,L],C_L(e_1)]=0,\,\, [[L,e_1],C_L(e_1)]=0,\,\, [C_L(e_1),[L,e_1]]=0,\\
                   & C_L(e_2)=<e_2,e_3,e_4>,\,\,[[e_2,L],C_L(e_2)]=0,\,\, [[L,e_2],C_L(e_2)]=0,\,\, [C_L(e_2),[L,e_2]]=0,\\
                   &C_L(e_3)=<e_1,e_2,e_4>,\,\,[[e_3,L],C_L(e_3)]=0,\,\, [[L,e_3],C_L(e_3)]=0,\,\, [C_L(e_3),[L,e_3]]=0,\\
                     &C_L(e_4)=<e_1,e_2,e_3,e_4>,\,\,[[e_4,L],C_L(e_4)]=0,\,\, [[L,e_4],C_L(e_4)]=0,\,\, [C_L(e_4),[L,e_4]]=0.
\end{align*}
\end{minipage}
}\\

Therefore, four dimensional nilpotent complex Leibniz algebras are CL-algebras.
Thus, we have proved that nilpotent complex Leibniz algebras of dimension less than equal to four are CL-algebras.
\end{proof}
\section{Group actions and CL-algebras}
In \cite{MS}, authors have defined a notion of a finite group action on Leibniz algebra. In this section, we study the CL-property of Leibniz algebras under actions of finite groups.
\begin{defn}\label{definition-group-action}
Let $L$ be a Leibniz algebra and $G$ be a finite group. The group $G$ is said to act from the left if there exists a function 
$$\phi : G\times L \rightarrow L,~~ (g, x) \mapsto \phi (g, x) = gx$$ satisfying the following conditions.
\begin{enumerate}
\item For each $g\in G$ the map $x\mapsto gx,$ denoted by $\psi_g$ is linear.
\item $ex= x$ for all $x \in L$, where $e \in G$ is the group identity.
\item $g_1(g_2x) = (g_1g_2)x$ for all $g_1, g_2 \in G$ and $x \in L$.
\item $g[x, y] = [gx, gy]$ for all $g\in G$ and $x, y \in L.$
\end{enumerate}  
\end{defn}

The following is an alternative formulation of the above definition.
\begin{prop}
A finite group $G$ acts on a Leibniz algebra $L$ if and only if there exists a group homomorphism 
$$\psi : G \rightarrow \text{Iso}_{Leib} (L, L),~~g \mapsto \psi(g)=\psi_g,$$ from the group $G$ to the group of Leibniz algebra isomorphisms from $L$ to $L$, where $\psi_g (x) = gx$ is the left translation by $g.$  
\end{prop}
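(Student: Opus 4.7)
The plan is to prove the two directions of the biconditional separately, using the four axioms of Definition \ref{definition-group-action} on one side and the two axioms of a group homomorphism into an automorphism group on the other.

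For the forward direction, I would assume $G$ acts on $L$ via $\phi(g,x)=gx$ and define $\psi: G \to \text{Iso}_{Leib}(L,L)$ by $\psi(g) = \psi_g$, where $\psi_g(x) = gx$. The task splits into two subtasks. First, I must verify that each $\psi_g$ actually lands in $\text{Iso}_{Leib}(L,L)$: linearity of $\psi_g$ is axiom (1) of Definition \ref{definition-group-action}; the bracket-preservation identity $\psi_g([x,y]) = [\psi_g(x), \psi_g(y)]$ is axiom (4); and bijectivity is obtained by exhibiting $\psi_{g^{-1}}$ as a two-sided inverse, using $\psi_{g^{-1}} \circ \psi_g = \psi_{g^{-1} g} = \psi_e = \mathrm{id}_L$ (and symmetrically), which combines axioms (2) and (3). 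Second, I must check $\psi$ is a group homomorphism, i.e.\ $\psi_{g_1 g_2} = \psi_{g_1} \circ \psi_{g_2}$ and $\psi_e = \mathrm{id}_L$; these are precisely axioms (3) and (2).

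For the reverse direction, I would assume a group homomorphism $\psi: G \to \text{Iso}_{Leib}(L,L)$ and define the action $\phi(g,x) = gx := \psi_g(x)$. The four conditions of Definition \ref{definition-group-action} fall out immediately: condition (1) holds because $\psi_g$ is (by hypothesis) a Leibniz algebra morphism, hence in particular $\mathbb{K}$-linear; condition (2) is the unitality $\psi_e = \mathrm{id}_L$ of the homomorphism; condition (3) is the multiplicativity $\psi_{g_1 g_2} = \psi_{g_1}\circ \psi_{g_2}$ evaluated at $x$; and condition (4) is exactly the statement that $\psi_g$ preserves the Leibniz bracket.

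Since the statement is essentially a restatement of a familiar adjunction between actions and homomorphisms into an automorphism group, there is no serious technical obstacle; the only point where one must be slightly careful is in the forward direction, ensuring that $\psi_g$ is an \emph{isomorphism} (not merely an endomorphism of the Leibniz algebra), which is precisely the reason one invokes the group-inverse element $g^{-1}$ together with axioms (2) and (3) to produce a two-sided inverse morphism. Once that is observed, the proof is a direct translation of each of the four action axioms into the corresponding homomorphism property and conversely.
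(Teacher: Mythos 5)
Your proof is correct and is exactly the standard dictionary between group actions and homomorphisms into the automorphism group; the paper itself offers no proof of this proposition (it presents it as an immediate reformulation of Definition 5.1), so your argument simply supplies the routine verification the authors omitted. The one point you rightly single out --- that bijectivity of $\psi_g$ must be checked via $\psi_{g^{-1}}$ using axioms (2) and (3), rather than assumed --- is the only nontrivial step, and you handle it correctly.
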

\begin{remark}
Let $G$ be a finite group and ${\mathbb K}[G]$ be the associated group ring. If $G$ acts on a Leibniz algebra $L$  then $L$ may be viewed as a ${\mathbb K}[G]$-module.
\end{remark}
\begin{defn}
Suppose $L$ is a Leibniz algebra equipped with an action of a finite group $G$.  A map $f: L\to L$ is said to be an equivariant map if $f(gx)=gf(x)$.
\end{defn}
\begin{lemma}\label{GA1}
Suppose a finite group $G$ acts on a Leibniz algebra $L$. Then there is a map 
\begin{align*}
C_L(x)\to C_L(gx).
\end{align*}
\end{lemma}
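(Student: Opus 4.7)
The plan is to define the desired map as the restriction of left translation by $g$ to the centralizer of $x$, namely $\psi_g|_{C_L(x)} : C_L(x) \to C_L(gx)$ sending $y \mapsto gy$. The only nontrivial thing to check is that this restriction actually takes values in $C_L(gx)$.

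First I would take an arbitrary $y \in C_L(x)$, so that $[x,y] = 0 = [y,x]$. Using condition (4) of Definition \ref{definition-group-action} together with the linearity of $\psi_g$ (condition (1)), which forces $g \cdot 0 = 0$, I would compute
\begin{align*}
[gx, gy] &= g[x,y] = g \cdot 0 = 0, \\
[gy, gx] &= g[y,x] = g \cdot 0 = 0.
\end{align*}
Hence $gy \in C_L(gx)$, so the restriction is well defined and the required map exists.

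An essentially instant alternative argument is to observe that each $\psi_g$ is in fact a Leibniz algebra automorphism of $L$: conditions (2) and (3) produce $\psi_{g^{-1}}$ as a two-sided inverse of $\psi_g$, and condition (4) gives the bracket-preservation. Applying Proposition \ref{prop invariant} with $f = \psi_g$ then yields directly $\psi_g(C_L(x)) = C_L(gx)$, so $\psi_g$ restricts to a linear bijection $C_L(x) \to C_L(gx)$. I do not anticipate any genuine obstacle here; the entire content of the lemma is that the group action permutes centralizers in a manner compatible with its action on the underlying elements, and this is forced by the axioms of the action.
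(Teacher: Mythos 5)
Your proof is correct and is essentially identical to the paper's: both define the map as $y \mapsto gy$ and verify well-definedness via $[gx,gy]=g[x,y]=g\cdot 0=0$ and $[gy,gx]=g[y,x]=g\cdot 0=0$. Your alternative remark via Proposition \ref{prop invariant} is a valid bonus but not needed.
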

\begin{proof}
For $x\in L$ and $y\in C_L(x)$, we define a map $$p:C_L(x)\to C_L(gx),\,\,\,y\mapsto gy$$
Note that the map is well defined as $[gx,gy]=g[x,y]=g0=0$ and $[gy,gx]=g[y,x]=g0=0$. 
\end{proof}
Note that the map as defined above is just restriction of the group action map $\phi$ to $C_L(x)$.
\begin{theorem}\label{action and CL element}
A CL-element of $L$ is preserved under the action of $G$ on $L$.
\end{theorem}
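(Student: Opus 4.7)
The plan is to recognize that this theorem is essentially a corollary of Theorem \ref{invariant}. By Definition \ref{definition-group-action}, each $\psi_g : L \to L$, $x \mapsto gx$, is linear (condition (1)), preserves the bracket (condition (4)), and has a two-sided inverse $\psi_{g^{-1}}$ (by conditions (2)--(3)). Hence $\psi_g$ is a Leibniz algebra automorphism of $L$, so applying Theorem \ref{invariant} with $L_1 = L_2 = L$ and $f = \psi_g$ immediately yields that $\psi_g(a) = ga$ has the CL-property whenever $a$ does.

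If one prefers to verify the three defining conditions of the CL-property directly for $ga$, I would proceed as follows. Fix $x \in L$ and $y \in C_L(x)$, and rewrite $x = g(g^{-1}x)$ and $y = g(g^{-1}y)$. By equivariance of the bracket applied to $g^{-1}$, the element $g^{-1}y$ lies in $C_L(g^{-1}x)$ (this is the symmetric counterpart of Lemma \ref{GA1}). The CL-property of $a$ applied at the pair $(g^{-1}x, g^{-1}y)$ then yields three vanishing nested brackets inside $L$. Applying $g$ to each equation and using $g[u,v] = [gu,gv]$ twice converts them precisely into the three conditions $[[x,ga],y]=0$, $[[ga,x],y]=0$, $[y,[ga,x]]=0$ that constitute the CL-property of $ga$.

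There is no substantive obstacle here: the result follows quickly once one notes that a group action, by definition, provides Leibniz algebra automorphisms, and that these transport CL-elements to CL-elements by the already-established Theorem \ref{invariant}. The only mild point of care is justifying that $\psi_g$ is genuinely an isomorphism (rather than just a homomorphism), which is the content of axioms (2) and (3) in Definition \ref{definition-group-action}.
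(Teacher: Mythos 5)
Your proposal is correct, and your primary route is genuinely different from (and cleaner than) the paper's. The paper proves the theorem by direct computation: it writes $[[x,ga],y]=[g[g^{-1}x,a],y]=g[[g^{-1}x,a],g^{-1}y]=g0=0$, using that $g^{-1}y\in C_L(g^{-1}x)$ (via Lemma \ref{GA1} applied to $g^{-1}$) together with the CL-property of $a$ at the pair $(g^{-1}x,g^{-1}y)$ --- which is exactly your second, ``direct verification'' route, and you in fact state the needed containment $g^{-1}y\in C_L(g^{-1}x)$ more carefully than the paper does (the paper cites only $gy\in C_L(gx)$). Your first route, deducing the theorem from Theorem \ref{invariant} applied to the automorphism $\psi_g$, is shorter and makes the logical structure transparent: the only content specific to group actions is that each $\psi_g$ is a Leibniz algebra automorphism, which is already recorded in the paper's Proposition 5.2. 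What the paper's computational approach buys is self-containedness (it does not lean on Theorem \ref{invariant} or on the surjectivity argument hidden in its proof); what your approach buys is brevity and the observation that the group structure plays no role beyond invertibility of each $\psi_g$.
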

\begin{proof}
Let $a\in L$ satisfies the CL-property, we need to show that for all $g\in G$, $ga$ also satisfies the CL-property.
As $a$ satisfies the CL-property, we have for all $x\in L$ and $y\in C_L(x)$
\begin{align*}
&[[x,a],y]=0, \\
&[[a,x],y]=0,\\
&[y,[a,x]]=0.
\end{align*}
From the Lemma \ref{GA1}, $gy\in C_L(gx)$. Thus, we have,
$$[[x,ga],y]=[g[g^{-1}x,a],y]=g[[g^{-1}x,a],g^{-1}y]=g0=0.$$
Similarly, one can show that 
\begin{align*}
&[[ga,x],y]=0,\\
&[y,[ga,x]]=0.
\end{align*}
So, $ga$ also satisfies the CL-property.
\end{proof}
\begin{cor}
An equivariant automorphism of Leibniz algebras preserves CL-elements.
\end{cor}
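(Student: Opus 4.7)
The plan is to observe that the corollary reduces essentially immediately to Theorem \ref{invariant}. An equivariant automorphism $f : L \to L$ is, by definition, a Leibniz algebra automorphism of $L$, hence in particular a Leibniz isomorphism from $L$ to itself. Therefore, without using equivariance at all, Theorem \ref{invariant} applied with $L_1 = L_2 = L$ tells us that if $a \in L$ satisfies the CL-property, then so does $f(a)$. This already gives the conclusion.

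So the proof I would write is essentially one sentence: let $f : L \to L$ be an equivariant automorphism and let $a \in L$ be a CL-element; since $f$ is in particular a Leibniz algebra isomorphism of $L$ with itself, Theorem \ref{invariant} yields that $f(a)$ is a CL-element, and hence the set of CL-elements of $L$ is stable under $f$.

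It is worth noting, as a remark accompanying the proof, that strict equivariance plays no role in the argument; the role of this corollary is rather to emphasize the compatibility between the $G$-action and the CL-structure. Combined with Theorem \ref{action and CL element}, which says that the set of CL-elements is preserved under the $G$-action itself, one sees that the subalgebra $S$ of all CL-elements is stable both under $G$ and under any equivariant automorphism, so the restricted action of $G$ on $S$ is well-defined and compatible with equivariant endomorphisms.

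Because the claim follows by direct citation of a previously proved theorem, there is no genuine obstacle; the only thing to be careful about is to confirm that an equivariant automorphism is indeed required to be a Leibniz morphism (which is how the term is used here, since it is called an automorphism \emph{of Leibniz algebras}), so that Theorem \ref{invariant} legitimately applies.
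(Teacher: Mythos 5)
Your proposal is correct, and the core of the argument is the same as in the paper: the work is done by Theorem \ref{invariant}, applied to $f$ viewed as a Leibniz isomorphism $L\to L$. The paper's own proof dresses this up slightly: it first uses Theorem \ref{action and CL element} to note that $gx$ is a CL-element, and then concludes that $f(gx)=gf(x)$ is a CL-element by combining Theorems \ref{invariant} and \ref{action and CL element}, so the equivariance identity actually appears in the written argument. Your observation that equivariance is logically superfluous for the bare statement is accurate --- an automorphism alone suffices, and the hypothesis serves only to place the corollary in the context of the $G$-action (and, as you note, to guarantee that the subalgebra $S$ of CL-elements is stable both under $G$ and under $f$). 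Both routes are valid; yours is the more economical reading, while the paper's phrasing emphasizes the interplay with the group action that is the theme of the section.
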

\begin{proof}
Suppose a Leibniz algebra $L$ is equipped with an action of a finite group $G$. Let $x\in L$ be a CL-element. By Theorem \ref{action and CL element}, $gx$ is also a CL-element for all $g\in G$. Suppose $f:L\to L$ is an equivariant automorphism. From the Theorem \ref{invariant} and  \ref{action and CL element}, $f(gx)=gf(x)$ is also a CL-element.
\end{proof}
\section*{Further questions}
We end this article with the following interesting questions:
\begin{enumerate}
%\item We showed by an example that in general not every finite dimensional nilpotent Leibniz algebra is a CL-algebra. In theorem \ref{main theorem}, we proved Leibniz algebras up to dimension four are CL-algebras. What about nilpotent Leibniz algebras of dimension greater than four, are they CL-algebras? One may be interested to find out the highest dimension of Leibniz algebras for which nilpotent Leibniz algebras are  CL-algebras.
\item In the Theorem \ref{main theorem nil}, we proved nilpotent complex Leibniz algebras up to dimension four are CL-algebras. What about nilpotent Leibniz algebras of dimension greater than four, are they CL-algebras? In general, are all nilpotent Leibniz algebras CL-algebras?
\item What is the relationship of CL-algebras with solvable Leibniz algebras? %As every nilpotent Leibniz algebras are Solvable, thus from the section \ref{nil}, not every solvable Leibniz algebras are CL-algebras. Now, we are interested to know about the converse part. Are all CL-algebras are solvable Leibniz algebras?
\end{enumerate}
\section*{Acnowledgements}
The second author would like to thank Prof. Ayupov of Institute of Mathematics, Uzbekistan Academy of Sciences and Prof. Karimbergen of Karakalpak State University, Uzbekistan for valuable discussions on this problem in a CIMPA research school on ``Non-associative algebra and applications" held at Tashkent, Uzbekistan. The second author is especially thankful to Dr. Abror Kh. Khudoyberdiyev for reading a draft version of the article and his useful suggestions. The second author expresses his gratitude to CIMPA, France for their financial help to attend the research school.


\begin{thebibliography}{BFGM03}
%\bibitem{bcwz}
%H. Bursztyn, M. Crainic, A. Weinstein and C. Zhu, Integration of twisted Dirac brackets, Duke Math. J. 123 (2004), no. 3, 549-607.
\bibitem{AAO}
S. Albeverio, Sh. A. Ayupov, B. A. Omirov, On nilpotent and simple Leibniz algebras, \emph{Comm. in Algebra}, vol. 33(1), 2005, pp. 159-172.

\bibitem{AOR}
 S. Albeverio, B.A. Omirov,  I.S. Rakhimov, Classification of 4-dimensional nilpotent complex Leibniz algebras. \emph{Extracta mathematicae}, vol. 21(3), (2006), pp. 197-210.

\bibitem{A}
 A. R. Ashrafi, On  finite groups with a given number of centralizers. \emph{Algebra Colloq.} \textbf{7}: 2 (2000), 139-146.

\bibitem{A3} 
Sh. A. Ayupov and B. A. Omirov, On some classes of nilpotent Leibniz algebras, \emph{Siberian Math. J.} \textbf{42} (1) (2001) 18-29.

\bibitem{BI}
Y. Barnea, I. M. Isaacs, (2003), Lie algebras with few centralizer dimensions, \emph{J. Algebra}, \textbf{259}:284-299.

\bibitem{BS}
S. M.  Belcastro, G. J. Sherman, (1994), Counting centralizers in  finite groups. \emph{Math. Mag.} \textbf{5}:111-114.

\bibitem{Bloh}
A. Bloh, On a generalization of Lie algebra notion,
\emph{Math. in USSR Doklady}, 1965, \textbf{165}(3):471-473.

%\bibitem{bredon67}
%G. E. Bredon, Equivariant cohomology theories, {\em Lecture Notes in Math.}, No. 34, Springer-Verlag, Berlin, 1967.

%\bibitem{elm} 
%A. D. Elmendorf, Systems of fixed point sets, {\em Trans. Amer. Math. Soc.}, 277(1), (1983), 275-284.

\bibitem{Humphreys}
J. E. Humphreys, Introduction to Lie algebras and representation theory, \emph{Graduate Texts in Mathematics}, \textbf{9}, New York : Springer-Verlag, 1972.

\bibitem{KSS}
L. Kurdachenko, N. Semko, I. Subbotin, (2017), The Leibniz algebras whose subalgebras are ideals, \emph{Open Mathematics}, \textbf{15}(1), pp. 92-100.

\bibitem{Loday}
J.L. Loday, Cup product for Leibniz cohomology and dual Leibniz algebras, \emph{Math. Scand.} \textbf{77}, 189-196 (1995).

\bibitem{LP}
J. L. Loday , T. Pirashvili, Universal enveloping algebras of Leibniz algebras and (co)homology, \emph{Math. Ann.,} \textbf{296}, 139-158 (1993).

\bibitem{MS} 
G. Mukherjee, R. Saha, Cup-Product for Equivariant Leibniz Cohomology and Zinbiel Algebras, \emph{Algebra Colloquium}, \textbf{26} : 2 (2019), 271-284.

%\bibitem{karim}
%B. A. Omirov, I. S. Rakhimov, R. M. Turdibaev, (2012), On Description of Leibniz Algebras Corresponding to sl 2, \emph{Algebras and Representation Theory}, \textbf{16}(5), 1507-1519. 

\bibitem{SMR}
Saffarnia, Somayeh; Moghaddam, Mohammad Reza R.; Rostamyari, Mohammad, Centralizers in Lie algebras, \emph{Indian J. Pure Appl. Math.} \textbf{49} (2018), no. 1, 39-49. 

\bibitem{Z}
A. J. Zapirain (2005),  Centralizer sizes and nilpotency class in Lie algebras and  finite p-groups, \emph{Proc. Am. Math.
Soc.} \textbf{133}(10): 2817-2820.

%\bibitem
 %Elasvili, A. G. Centralizers of nilpotent elements in semisimple Lie algebras. %(Russian) Collection of articles on algebra, 1. Sakharth. SSR Mecn. Akad. Math. Inst. srom. 46 (1975), 109–132.
 
%\bibitem{kurdachenko2017}
%Kurdachenko, Leonid A.; Subbotin, Igor Ya.; Yashchuk, Viktoriia S.; 
%Leibniz algebras whose subideals are ideals. 
%\em{J. Algebra Appl.} 17 (2018), no. 8, 1850151, 15 pp.

%\bibitem{}
%Kurdachenko, Leonid A.; Semko, Nikolai N.,Subbotin, Igor Ya
%The Leibniz algebras whose subalgebras are deals.\em{Open Math.} 15 (2017), 92–100. 


%\bibitem{}
%S. Albeverio , Sh. A. Ayupov & B. A. Omirov (2005): ON NILPOTENT AND SIMPLE LEIBNIZ ALGEBRAS, Communications in Algebra, 33:1, 159-172 .
\end{thebibliography}
\end{document}